\newcommand{\bb}{\mathbb}
\newcommand{\mc}{\mathcal}
\newcommand{\where}{\ |\ }
\newcommand{\abs}[1]{\left\lvert #1 \right\rvert}
\newcommand{\minus}{\,\backslash\,}
\DeclareMathOperator{\Vol}{Vol}
\newtheorem{thm}{Theorem}[section]
\newtheorem{prop}[thm]{Proposition}
\newtheorem{cor}[thm]{Corollary}
\theoremstyle{definition}
\newtheorem{exam}[thm]{Example}
\title{Mixed volumes of hypersimplices}
\author{Gaku Liu}
\begin{document}
\maketitle

\begin{abstract}
In this paper we consider mixed volumes of combinations of hypersimplices. These numbers, called ``mixed Eulerian numbers'', were first considered by A.\ Postnikov and were shown to satisfy many properties related to Eulerian numbers, Catalan numbers, binomial coefficients, etc. We give a general combinatorial interpretation for mixed Eulerian numbers and prove the above properties combinatorially. In particular, we show that each mixed Eulerian number enumerates a certain set of permutations in $S_n$. We also prove several new properties of mixed Eulerian numbers using our methods. Finally, we consider a type $B$ analogue of mixed Eulerian numbers and give an analogous combinatorial interpretation for these numbers.
\end{abstract}

\section{Introduction}

For integers $1 \le k \le n$, the \emph{hypersimplex} $\Delta_{k,n} \subset \bb R^{n+1}$ is the convex hull of all points of the form
\[
e_{i_1} + e_{i_2} + \dotsb + e_{i_k}
\]
where $1 \le i_1 < i_2 \dotsb < i_k \le n+1$ and $e_i$ is the $i$-th standard basis vector. Thus, $\Delta_{k,n}$ is an $n$-dimensional polytope which lies in the hyperplane $x_1 + \dotsb + x_{n+1} = k$. Given a polytope $P \subset \bb R^{n+1}$ which lies in a hyperplane $x_1 + \dotsb + x_{n+1} = \alpha$ for some $\alpha \in \bb R$, we define its (normalized) volume $\Vol P$ to be the usual $n$-dimensional volume of the projection of $P$ onto the first $n$ coordinates. 
It is a classical result (usually attributed to Laplace \cite{Lap}) that
\[
n! \Vol \Delta_{k,n} = A(n,k),
\]
where the \emph{Eulerian number} $A(n,k)$ is the number of permutations on $n$ letters with exactly $k-1$ descents.

We now define the mixed volume of a set of polytopes. Given a polytope $P$ and a real number $\lambda \ge 0$, let $\lambda P = \{\lambda x \where x \in P\}$. Given polytopes $P_1$, \dots, $P_m \subset \bb R^n$, let their \emph{Minkowski sum} be
\[
P_1 + \dotsb + P_m = \{ x_1 + \dotsb + x_m \where x_i \in P_i \text{ for all } i \}.
\]
For nonnegative real numbers $\lambda_1$, \dots, $\lambda_m$, the function
\[
f(\lambda_1,\dotsc,\lambda_m) = \Vol(\lambda_1P_1 + \dotsb + \lambda_mP_m)
\]
is known to be a homogeneous polynomial of degree $n$ in the variables $\lambda_1$, \dots, $\lambda_m$. Hence there is a unique symmetric function $\Vol$ defined on $n$-tuples of polytopes in $\bb R^n$ such that
\[
f(\lambda_1,\dotsc,\lambda_m) = \sum_{i_1,\dotsc,i_n = 1}^m \Vol(P_{i_1},\dotsc,P_{i_n})\lambda_{i_1} \dotsb \lambda_{i_n}.
\]
The number $\Vol(P_1,\dotsc,P_n)$ is called the \emph{mixed volume} of $P_1$, \dots, $P_n$.  Mixed volumes of lattice polytopes have important connections to algebraic geometry, where they count the number of solutions to generic systems of polynomial equations; see \cite{Ber}.
If $P_1 = \dotsb = P_n = P$, then $\Vol(P_1,\dotsc,P_n)$ equals the ordinary volume $\Vol(P)$. If $P_1$, \dots, $P_m \subset \bb R^{n+1}$ and each $P_i$ lies in a hyperplane $x_1 + \dotsb + x_{n+1} = \alpha_i$ for some $\alpha_i \in \bb R$, then we define the mixed volume $\Vol(P_1,\dotsc,P_n)$ in terms of the normalized volume defined previously.

Let $c_1$, $c_2$, \dots, $c_n$ be nonnegative integers such that $c_1 + \dotsb+ c_n = n$. We define
\[
A_{c_1,\dotsc,c_n} = n! \Vol(\Delta_{1,n}^{c_1}, \Delta_{2,n}^{c_2}, \dotsc, \Delta_{n,n}^{c_n})
\]
where $(\Delta_{1,n}^{c_1}, \Delta_{2,n}^{c_2}, \dotsc, \Delta_{n,n}^{c_n})$ denotes the $n$-tuple with $c_1$ entries $\Delta_{1,n}$, $c_2$ entries $\Delta_{2,n}$, and so on. The numbers $A_{c_1,\dotsc,c_n}$ are called \emph{mixed Eulerian numbers}, and were introduced by Postnikov in \cite{Post}.

As with ordinary volumes of hypersimplices, mixed volumes of hypersimplices appear to satisfy certain combinatorial identities.
It is immediate that $A_{0^{k-1},n,0^{n-k}} = A(n,k)$, where $0^l$ denotes $l$ entries 0.
Furthermore, the result of Ehrenborg, Readdy, and Steingr\'imsson \cite{ERS} states that
\[
A_{0^{k-2},r,n-r,0^{n-k}}
\]
equals the number of permutations $w \in S_{n+1}$ with $k-1$ descents and $w_1 = r+1$. Other properties are listed in Theorem~\ref{properties} and include
\[
A_{1,\dotsc,1} = n! \quad
A_{k,0,\dotsc,0,n-k} = \binom{n}{k} \quad
A_{c_1,\dotsc,c_n} = 1^{c_1} 2^{c_2} \dotsb n^{c_n} \text{ if } c_1 + \dotsb + c_i \ge i \text{ for all } i. 
\]
These results were proven in \cite{Post} using algebraic and geometric methods. Additional formulas involving mixed Eulerian numbers and their generalizations to other root systems were derived by Croitoru in \cite{Cro}.

In this paper, the main result is a general combinatorial interpretation for the mixed Eulerian numbers which encompasses the previous results. In particular, we show that each mixed Eulerian number enumerates a certain well-defined set of permutations in $S_n$. (When $c_k = n$ and $c_i = 0$ for all $i \neq k$, this set of permutations is precisely the set of permutations with $k-1$ descents.) We show how the above results arise from this result. We also derive some new identities which follow from this interpretation. For example, we show that $A_{c_1,\dotsc,c_n} \le 1^{c_1}2^{c_2} \dotsb n^{c_n}$ for every mixed Eulerian number. We also show that
\begin{equation} \label{introidentity}
A_{n-m, 0^{k-3}, r, m-r, 0^{n-k}} = \sum_{i=0}^{n-m} \binom{m+i}{m} A(m,k-i;r)
\end{equation}
where $A(n,k;r)$ equals the number of permutations $w \in S_{n+1}$ with $k-1$ descents and $w_1 = r+1$. This generalizes the result of Ehrenborg, Readdy, and Steingr\'{i}msson.
The left hand side of \eqref{introidentity} with $r=0$ also appeared in the work of Micha\l ek et.\ al.\ \cite{MSUZ} during their study of exponential families arising from elementary symmetric polynomials. The authors used the recursions of \cite{Cro} to obtain the formula
\[
A_{n-m,0^{k-2},m,0^{n-k}} = \begin{dcases*}
\sum_{i=0}^{n-k} (n-k+1-i)\binom{n-i}{n-m}k^i A(m-i-1,m-n+k-1) & if $n-m < k-1$ \\
k^m & otherwise.
\end{dcases*}
\]

As a secondary result, we define the polytope $\Gamma_{k,n} \subset \bb R^n$ to be the convex hull of all points of the form 
\[
\pm e_{i_1} \pm e_{i_2} \pm \dotsb \pm e_{i_k}
\]
where $1 \le i_1 < \dotsb < i_k \le n$. For nonnegative integers $c_1$, \dots, $c_n$ such that $c_1 + \dotsb + c_n = n$, define
\[
B_{c_1,\dotsc,c_n} = n! \Vol(\Gamma_{1,n}^{c_1}, \Gamma_{2,n}^{c_2}, \dotsc, \Gamma_{n,n}^{c_n}).
\]
We refer to the $B_{c_1,\dotsc,c_n}$ as the \emph{type $B$ mixed Eulerian numbers}, whereas the $A_{c_1,\dotsc,c_n}$ are \emph{type $A$ mixed Eulerian numbers}. We give a combinatorial interpretation for the $B_{c_1,\dotsc,c_n}$ analogous to that of the $A_{c_1,\dotsc,c_n}$ and list several identities that follow from this interpretation.

\section{Permutohedra and signed permutohedra}

We first introduce two polytopes which will be used later in our proofs. Let $y_1$, \dots, $y_{n+1}$ be real numbers. The \emph{permutohedron} $P(y_1,\dotsc,y_n)$ is the convex hull of the $(n+1)!$ points of the form $(y_{w(1)}, \dots, y_{w(n+1)})$, where $w \in S_{n+1}$ is a permutation. For example, $\Delta_{k,n} = P(1^k, 0^{n+1-k})$. The permutohedron is an $n$-dimensional polytope lying in the hyperplane $x_1 + \dotsb + x_{n+1} = y_1 + \dotsb + y_{n+1}$.

We have the following characterizations of $P(y_1,\dotsc,y_{n+1})$; see, for example, \cite{Post}.

\begin{prop} \label{pchar1}
Let $y_1 \ge \dotsb \ge y_{n+1}$ be real numbers. Then $P(y_1,\dotsc,y_n)$ is the set of points $(x_1,\dotsc,x_{n+1}) \in \bb R^{n+1}$ such that for all $1 \le k \le n$ and all $k$-element subsets $\{i_1,\dotsc,i_k\} \subset \{1,\dotsc,n+1\}$, we have
\[
x_{i_1} + \dotsb + x_{i_k} \le y_1 + \dotsb + y_k,
\]
and
\[
x_1 + \dotsb + x_{n+1} = y_1 + \dotsb + y_{n+1}.
\]
\end{prop}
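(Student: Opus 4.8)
The plan is to establish the two inclusions $P(y_1,\dotsc,y_{n+1}) \subseteq Q$ and $Q \subseteq P(y_1,\dotsc,y_{n+1})$ separately, where $Q$ denotes the right-hand side of the asserted equality. The first is immediate: $Q$ is an intersection of closed half-spaces with an affine hyperplane, hence convex, so it is enough to check that each vertex $(y_{w(1)},\dotsc,y_{w(n+1)})$ of $P(y_1,\dotsc,y_{n+1})$ lies in $Q$. But for any $k$-element subset $\{i_1,\dotsc,i_k\}$ the sum $y_{w(i_1)} + \dotsb + y_{w(i_k)}$ is a sum of $k$ distinct entries of the weakly decreasing list $y_1,\dotsc,y_{n+1}$, hence at most $y_1 + \dotsb + y_k$, while $\sum_j y_{w(j)} = \sum_j y_j$.

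For the reverse inclusion, I would first observe that $Q$ is bounded — the one-element subsets force $x_i \le y_1$, and subtracting the $n$-element subset inequalities from the equality $x_1 + \dotsb + x_{n+1} = y_1 + \dotsb + y_{n+1}$ forces $x_i \ge y_{n+1}$ — so $Q$ is a polytope, equal to the convex hull of its vertices; it thus suffices to show that every vertex of $Q$ is a permutation of $(y_1,\dotsc,y_{n+1})$. Fix a vertex $x$ and let $\mathcal{T}$ be the family of subsets $S \subseteq \{1,\dotsc,n+1\}$ for which the corresponding inequality is tight at $x$, i.e.\ $\sum_{i \in S} x_i = g(|S|)$ where $g(k) := y_1 + \dotsb + y_k$ (so $\emptyset$ and $\{1,\dotsc,n+1\}$ always lie in $\mathcal{T}$). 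The hypothesis $y_1 \ge \dotsb \ge y_{n+1}$ says precisely that $g$ has weakly decreasing successive differences, i.e.\ is concave, which makes $S \mapsto g(|S|)$ submodular; a routine uncrossing then shows $\mathcal{T}$ is closed under union and intersection, since for $S, T \in \mathcal{T}$,
\[
g(|S|) + g(|T|) = \sum_{i \in S} x_i + \sum_{i \in T} x_i = \sum_{i \in S \cup T} x_i + \sum_{i \in S \cap T} x_i \le g(|S \cup T|) + g(|S \cap T|) \le g(|S|) + g(|T|),
\]
forcing equality throughout, so that $S \cup T, S \cap T \in \mathcal{T}$.

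To finish, note that since $x$ is a vertex of $Q$, the normals of the tight constraints together with the normal $\mathbf{1}_{\{1,\dotsc,n+1\}}$ of the equality constraint span $\bb R^{n+1}$; as all of these are among $\{\mathbf{1}_S : S \in \mathcal{T}\}$, that set spans $\bb R^{n+1}$. Now fix an inclusion-maximal chain $\emptyset = S_0 \subsetneq S_1 \subsetneq \dotsb \subsetneq S_r = \{1,\dotsc,n+1\}$ in $\mathcal{T}$. Using maximality of the chain together with closure of $\mathcal{T}$ under $\cup$ and $\cap$, one shows that each $T \in \mathcal{T}$ is a union of ``blocks'' $S_i \minus S_{i-1}$: for consecutive $S_{i-1} \subsetneq S_i$ the set $S_{i-1} \cup (T \cap S_i)$ lies in $\mathcal{T}$ between $S_{i-1}$ and $S_i$, hence equals one of them, so $S_i \minus S_{i-1}$ is contained in or disjoint from $T$. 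Consequently $\{\mathbf{1}_{S_0},\dotsc,\mathbf{1}_{S_r}\}$ already spans $\bb R^{n+1}$, which forces $r = n+1$ and hence $|S_i| = i$ for all $i$; then the unique element of $S_k \minus S_{k-1}$ has $x$-coordinate $g(k) - g(k-1) = y_k$, so $x$ is a permutation of $(y_1,\dotsc,y_{n+1})$, as desired. I expect this last, purely combinatorial, analysis of the lattice $\mathcal{T}$ — rather than the elementary convexity or boundedness input — to be the only point needing care.
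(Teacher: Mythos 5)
Your proof is correct and complete. Note that the paper does not actually prove this proposition --- it is stated as a known characterization with a citation to Postnikov --- so there is no in-paper argument to compare against; what you have written is the standard self-contained proof. The easy inclusion via convexity and the rearrangement bound $y_{w(i_1)} + \dotsb + y_{w(i_k)} \le y_1 + \dotsb + y_k$ is fine, and the converse is the classical submodular/uncrossing argument: concavity of $g(k) = y_1 + \dotsb + y_k$ (equivalent to $y_1 \ge \dotsb \ge y_{n+1}$) makes $S \mapsto g(\abs{S})$ submodular, the tight-set family at a vertex is a lattice, a maximal chain in it must have all steps of size one because the tight normals span $\bb R^{n+1}$ and every tight set is a union of chain blocks, and then the coordinates are forced to be $g(k) - g(k-1) = y_k$ in some order. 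Each of these steps is justified in your write-up (including the boundedness of $Q$, needed to invoke the vertex description, and the fact that $\emptyset$ and the full set are always tight). This is essentially the proof that the permutohedron is the base polytope of the polymatroid associated to $g$; an alternative elementary route, closer in spirit to some textbook treatments, shows directly that any point of $Q$ other than a permutation of $(y_1,\dotsc,y_{n+1})$ is a nontrivial convex combination of two points of $Q$, but your vertex analysis is cleaner and fully rigorous as written.
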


\begin{prop} \label{pchar2}
For nonnegative real numbers $\lambda_1$, \dots, $\lambda_n$, we have
\[
\lambda_1 \Delta_{1,n} + \lambda_2 \Delta_{2,n} + \dotsb + \lambda_n \Delta_{n,n} = P(\lambda_1 + \dotsb + \lambda_n, \lambda_2 + \dotsb + \lambda_n, \dotsc,\lambda_n,0).
\]
Alternatively, if $y_1 \ge \dotsb \ge y_{n+1}$ are real numbers, then $P(y_1,\dotsc,y_{n+1})$ is a translation by $(y_{n+1},\dotsc,y_{n+1})$ of
\[
(y_1-y_2)\Delta_{1,n} + (y_2-y_3)\Delta_{2,n} + \dotsb + (y_n-y_{n+1})\Delta_{n,n}.
\]
\end{prop}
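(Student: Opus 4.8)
The plan is to establish the first displayed equality directly from the inequality description of permutohedra in Proposition~\ref{pchar1}, and then to obtain the ``alternatively'' statement by the change of variables $\lambda_k = y_k - y_{k+1}$ together with the observation that adding a constant vector $(t,\dotsc,t)$ to every point of $P(y_1,\dotsc,y_{n+1})$ produces $P(y_1+t,\dotsc,y_{n+1}+t)$.

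For the first equality, set $\mu_i = \lambda_i + \lambda_{i+1} + \dotsb + \lambda_n$ for $1 \le i \le n$ and $\mu_{n+1} = 0$; since the $\lambda_k$ are nonnegative we have $\mu_1 \ge \mu_2 \ge \dotsb \ge \mu_{n+1} = 0$, and the right-hand side of the claimed identity is $P(\mu_1,\dotsc,\mu_{n+1})$. I will show the two sides coincide as subsets of $\bb R^{n+1}$. For the inclusion ``$\subseteq$'', recall $\Delta_{k,n} = P(1^k,0^{n+1-k})$, so Proposition~\ref{pchar1} says that any point of $\Delta_{k,n}$ has the sum of any $j$ of its coordinates at most $\min(j,k)$ and the sum of all its coordinates equal to $k$. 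Consequently, a point $\lambda_1 x^{(1)} + \dotsb + \lambda_n x^{(n)}$ of the Minkowski sum (with $x^{(k)} \in \Delta_{k,n}$) has the sum of any $j$ coordinates at most $\sum_{k=1}^n \lambda_k \min(j,k)$ and total coordinate sum $\sum_{k=1}^n k\lambda_k$. An elementary computation gives $\sum_{k=1}^n \lambda_k \min(j,k) = \mu_1 + \dotsb + \mu_j$ and $\sum_{k=1}^n k\lambda_k = \mu_1 + \dotsb + \mu_{n+1}$, so by Proposition~\ref{pchar1} the point lies in $P(\mu_1,\dotsc,\mu_{n+1})$.

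For the reverse inclusion, since the Minkowski sum is convex and $P(\mu_1,\dotsc,\mu_{n+1})$ is the convex hull of the points $(\mu_{w(1)},\dotsc,\mu_{w(n+1)})$ over $w \in S_{n+1}$, it suffices to place each such vertex inside the Minkowski sum. Given $w$, for each $k$ let $v^{(k)}$ be the $0$--$1$ vector whose $i$-th coordinate is $1$ if $w(i) \le k$ and $0$ otherwise; this vector has exactly $k$ ones and hence is a vertex of $\Delta_{k,n}$, and from $\mu_j = \sum_{k=j}^n \lambda_k$ one checks that $\lambda_1 v^{(1)} + \dotsb + \lambda_n v^{(n)} = (\mu_{w(1)},\dotsc,\mu_{w(n+1)})$, as needed. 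This proves the first equality. For the second statement, given $y_1 \ge \dotsb \ge y_{n+1}$ put $\lambda_k = y_k - y_{k+1} \ge 0$; then $\mu_i = \sum_{k=i}^n (y_k - y_{k+1}) = y_i - y_{n+1}$, so the first equality yields $(y_1-y_2)\Delta_{1,n} + \dotsb + (y_n-y_{n+1})\Delta_{n,n} = P(y_1 - y_{n+1},\dotsc,y_n - y_{n+1},0)$, and translating by $(y_{n+1},\dotsc,y_{n+1})$ turns the right side into $P(y_1,\dotsc,y_{n+1})$. The only slightly delicate points are the two summation identities involving $\sum_k \lambda_k \min(j,k)$ and the assembly of the vectors $v^{(k)}$; both are routine, so I do not anticipate a genuine obstacle.
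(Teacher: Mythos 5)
Your argument is correct: both inclusions are established soundly, the two summation identities $\sum_k \lambda_k \min(j,k) = \mu_1+\dotsb+\mu_j$ and $\sum_k k\lambda_k = \mu_1+\dotsb+\mu_{n+1}$ check out, and the vertex decomposition $\lambda_1 v^{(1)}+\dotsb+\lambda_n v^{(n)} = (\mu_{w(1)},\dotsc,\mu_{w(n+1)})$ together with convexity of the Minkowski sum handles the reverse inclusion. The paper itself gives no proof of this proposition (it is cited to Postnikov), and your double-inclusion argument via Proposition~\ref{pchar1} is the standard way to establish it.
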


Now let $y_1$, \dots, $y_n$ be real numbers, and define the \emph{signed permutohedron} $SP(y_1,\dotsc,y_n)$ to be the convex hull of the $2^n n!$ points of the form $(\pm y_{w(1)}, \dotsc, \pm y_{w(n)})$, where $w \in S_n$ is a permutation. For example, $\Gamma_{k,n} = SP(1^k,0^{n-k})$. The signed permutohedron is an $n$-dimensional polytope lying in $\bb R^n$.

We have the following characterizations of $SP(y_1,\dotsc,y_n)$.

\begin{prop} \label{spchar1}
Let $y_1 \ge \dotsb \ge y_n \ge 0$ be real numbers. Then $SP(y_1,\dotsc,y_n)$ is the set of points $(x_1,\dotsc,x_n) \in \bb R^n$ such that for all $1 \le k \le n$ and all $k$-element subsets $\{i_1,\dotsc,i_k\} \subset \{1,\dotsc,n\}$, we have
\[
\abs{x_{i_1}} + \dotsb + \abs{x_{i_k}} \le y_1 + \dotsb + y_k.
\]
\end{prop}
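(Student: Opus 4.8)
The plan is to establish the two inclusions separately; throughout, write $Q$ for the set described on the right-hand side.

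First, $Q$ is a polytope: each condition $\abs{x_{i_1}}+\dotsb+\abs{x_{i_k}}\le y_1+\dotsb+y_k$ is the conjunction of the linear inequalities obtained by replacing each $\abs{x_{i_j}}$ with $\pm x_{i_j}$ over all choices of signs, and the case $k=1$ forces $\abs{x_i}\le y_1$; so $Q$ is a bounded intersection of finitely many closed half-spaces, in particular compact and convex. The inclusion $SP(y_1,\dotsc,y_n)\subseteq Q$ then follows by checking that each generating point $(\pm y_{w(1)},\dotsc,\pm y_{w(n)})$ lies in $Q$: for any $k$-element subset, $\abs{\pm y_{w(i_1)}}+\dotsb+\abs{\pm y_{w(i_k)}}$ is a sum of $k$ distinct terms of $y_1,\dotsc,y_n$, which is at most $y_1+\dotsb+y_k$ since $y_1\ge\dotsb\ge y_n\ge 0$.

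For the reverse inclusion $Q\subseteq SP(y_1,\dotsc,y_n)$, since both sets are compact and convex it suffices to show that they have the same support function $h(u)=\max\{\langle u,x\rangle\}$, and I would compute both. For $SP(y_1,\dotsc,y_n)$ the maximum over the generating points is found by choosing the sign in each coordinate to match that of the corresponding $u_i$ (allowed because each $y_j\ge 0$) and then, by the rearrangement inequality, pairing the largest $\abs{u_i}$ with the largest $y_j$; this gives $h_{SP}(u)=\sum_{i=1}^n u^{(i)} y_i$, where $u^{(1)}\ge\dotsb\ge u^{(n)}\ge 0$ is the decreasing rearrangement of $\abs{u_1},\dotsc,\abs{u_n}$. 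For $Q$, an optimal $x$ may be taken with each $x_i$ of the same sign as $u_i$; putting $t_i=\abs{x_i}\ge0$, the problem becomes to maximize $\sum_i\abs{u_i}t_i$ subject to $\sum_{j\in S}t_j\le y_1+\dotsb+y_{\abs{S}}$ for all subsets $S$, and these constraints are equivalent to the single chain $t^{(1)}+\dotsb+t^{(k)}\le y_1+\dotsb+y_k$, $k=1,\dotsc,n$, on the decreasing rearrangement of $t$. By the rearrangement inequality one may assume $t_1\ge\dotsb\ge t_n$, sorted compatibly with $\abs{u_1},\dotsc,\abs{u_n}$; then summation by parts writes the objective as $\sum_{k=1}^n(u^{(k)}-u^{(k+1)})(t_1+\dotsb+t_k)$ (with $u^{(n+1)}:=0$), and since each coefficient $u^{(k)}-u^{(k+1)}$ is nonnegative this is maximized by making every partial sum equal to its bound, i.e.\ by $t=(y_1,\dotsc,y_n)$, which is feasible. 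Hence $h_Q(u)=\sum_i u^{(i)} y_i=h_{SP}(u)$, and therefore $Q=SP(y_1,\dotsc,y_n)$.

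The argument is essentially bookkeeping; the points requiring care are the equivalence of the family of subset inequalities defining $Q$ with the single chain of prefix-sum inequalities on the sorted coordinates, and the validity of the rearrangement and summation-by-parts manipulations of the resulting linear program. There is no deeper obstacle. Alternatively, for $Q\subseteq SP(y_1,\dotsc,y_n)$ one can use the invariance of both sides under coordinate permutations and sign changes to reduce to $x_1\ge\dotsb\ge x_n\ge 0$ — so that $x\in Q$ says exactly that $x$ is weakly majorized by $(y_1,\dotsc,y_n)$ — then apply the classical fact that this produces a $z$ in the convex hull of the permutations of $(y_1,\dotsc,y_n)$ with $0\le x_i\le z_i$ for all $i$, and note that the box $\prod_i[-z_i,z_i]$, whose vertices $(\pm z_1,\dotsc,\pm z_n)$ lie in $SP(y_1,\dotsc,y_n)$ (a sign change applied to a convex combination of permutations of $(y_1,\dotsc,y_n)$ is again one), contains $x$.
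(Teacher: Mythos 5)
Your proof is correct. Note that the paper itself gives no proof of this proposition: it is stated as a known characterization, in parallel with Proposition~\ref{pchar1} for ordinary permutohedra, for which the paper defers to \cite{Post}. So there is no in-paper argument to compare against; what you have written is a complete, self-contained justification. The easy inclusion $SP(y_1,\dotsc,y_n)\subseteq Q$ is handled correctly (convexity of $Q$ plus checking generators), and your support-function computation for the reverse inclusion is sound: the reduction of the exponentially many subset constraints to the chain of prefix-sum constraints on the decreasing rearrangement of $(\abs{x_1},\dotsc,\abs{x_n})$, the sign-matching and rearrangement steps, and the Abel-summation bound $\sum_i \abs{u_i}t_i\le\sum_k u^{(k)}y_k$ are all valid, and equality of support functions of two compact convex sets does give equality of the sets. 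Your alternative argument via weak majorization (reduce by the symmetry of both sides to $x_1\ge\dotsb\ge x_n\ge 0$, invoke the standard fact that $x\prec_w y$ yields $z\prec y$ with $x\le z$ componentwise, and observe that the box $\prod_i[-z_i,z_i]$ has all its vertices in $SP(y_1,\dotsc,y_n)$) is equally correct and arguably cleaner, since it makes the connection to the Rado/Hardy--Littlewood--P\'olya majorization theorem explicit rather than re-deriving it through the LP duality implicit in the support-function calculation.
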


\begin{prop} \label{spchar2}
For nonnegative real numbers $\lambda_1$, \dots, $\lambda_n$, we have
\[
\lambda_1 \Gamma_{1,n} + \lambda_2 \Gamma_{2,n} + \dotsb + \lambda_n \Gamma_{n,n} = SP(\lambda_1+\dotsb+\lambda_n, \lambda_2+\dotsb+\lambda_n, \dotsc, \lambda_n).
\]
Alternatively, for real numbers $y_1 \ge \dotsb \ge y_n \ge 0$, we have
\[
SP(y_1,\dotsc,y_n) = (y_1-y_2)\Gamma_{1,n} + (y_2-y_3)\Gamma_{2,n} + \dotsb + (y_{n-1}-y_n)\Gamma_{n-1,n} + y_n \Gamma_{n,n}.
\]
\end{prop}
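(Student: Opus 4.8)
The plan is to prove the first displayed identity and then recover the second by the substitution $y_i = \lambda_i + \dotsb + \lambda_n$, equivalently $\lambda_i = y_i - y_{i+1}$ for $i < n$ and $\lambda_n = y_n$; here nonnegativity of the $\lambda_i$ corresponds exactly to $y_1 \ge \dotsb \ge y_n \ge 0$, the hypothesis of Proposition~\ref{spchar1}. Write $L = \lambda_1\Gamma_{1,n} + \dotsb + \lambda_n\Gamma_{n,n}$ and set $y_i = \lambda_i + \dotsb + \lambda_n$. Both $L$ and $SP(y_1,\dotsc,y_n)$ are convex and invariant under all signed coordinate permutations: for $SP$ this is immediate from the definition, and for $L$ it follows because each $\Gamma_{k,n}$ is carried to itself by any signed permutation (which merely permutes its generating points $\pm e_{i_1} \pm \dotsb \pm e_{i_k}$) and Minkowski sums commute with linear maps. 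Hence it suffices to verify the two inclusions, and for ``$\supseteq$'' only for the single sorted vertex $(y_1,\dotsc,y_n)$.

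For ``$\supseteq$'', the point $e_1 + e_2 + \dotsb + e_k$ lies in $\Gamma_{k,n}$, and
\[
\sum_{k=1}^n \lambda_k(e_1 + \dotsb + e_k) = \sum_{i=1}^n \Bigl(\sum_{k \ge i}\lambda_k\Bigr)e_i = (y_1,\dotsc,y_n),
\]
so $(y_1,\dotsc,y_n) \in L$; by the symmetry and convexity noted above, $SP(y_1,\dotsc,y_n) \subseteq L$.

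For ``$\subseteq$'', let $x = \sum_k \lambda_k w_k \in L$ with $w_k \in \Gamma_{k,n}$, and fix a $k$-element subset $S \subseteq \{1,\dotsc,n\}$. Since each $\lambda_l \ge 0$,
\[
\sum_{i\in S}\abs{x_i} \le \sum_{l=1}^n \lambda_l\sum_{i\in S}\abs{(w_l)_i}.
\]
Applying Proposition~\ref{spchar1} to $\Gamma_{l,n} = SP(1^l,0^{n-l})$ and the $k$-element subset $S$ bounds $\sum_{i\in S}\abs{(w_l)_i}$ by the sum of the $k$ largest entries of $(1^l,0^{n-l})$, namely $\min(k,l)$. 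Interchanging the order of summation then gives $\sum_l \lambda_l\min(k,l) = \sum_{j=1}^k\sum_{l\ge j}\lambda_l = y_1 + \dotsb + y_k$, so $x$ satisfies every inequality of Proposition~\ref{spchar1} and lies in $SP(y_1,\dotsc,y_n)$. Combining the inclusions gives $L = SP(y_1,\dotsc,y_n)$, and substituting $\lambda_i = y_i - y_{i+1}$ (with $\lambda_n = y_n$) for $y_1 \ge \dotsb \ge y_n \ge 0$ yields the second identity.

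I expect no genuine obstacle; the only points needing care are that the hypotheses of Proposition~\ref{spchar1} hold for each summand $(1^l,0^{n-l})$ (they do, being weakly decreasing and nonnegative) and the bookkeeping identity $\sum_l \lambda_l\min(k,l) = \sum_{j=1}^k\sum_{l\ge j}\lambda_l$. As an alternative, one could argue entirely through support functions: for $u \in \bb R^n$, both polytopes have support function $u \mapsto \sum_{i=1}^n y_i\abs{u_{\sigma(i)}}$, where $\sigma$ orders $\abs{u_1},\dotsc,\abs{u_n}$ decreasingly — for $L$ because $h_{\Gamma_{k,n}}(u)$ equals the sum of the $k$ largest $\abs{u_i}$ and support functions add under Minkowski sums, and for $SP(y_1,\dotsc,y_n)$ by the rearrangement inequality — whence the two polytopes coincide.
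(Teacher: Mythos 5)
Your argument is correct and complete: the containment $SP(y_1,\dotsc,y_n)\subseteq L$ via signed-permutation invariance plus the single sorted vertex, and the reverse containment via Proposition~\ref{spchar1} applied to each $\Gamma_{l,n}=SP(1^l,0^{n-l})$ together with the identity $\sum_l \lambda_l\min(k,l)=y_1+\dotsb+y_k$, are both sound, as is the substitution recovering the second display. The paper states Proposition~\ref{spchar2} without proof (as a standard fact parallel to the cited permutohedron case), so there is no argument to compare against; either your two-inclusion proof or the support-function computation you sketch would serve as a valid self-contained justification.
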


\section{The main theorem}

\subsection{$C$-permutations}

Let $n$ be a positive integer, and let $S$ be a totally ordered set with $\abs{S} = n$. Let $C = (C_1,\dotsc,C_n)$ be a sequence of $n$ pairwise disjoint sets such that
\begin{itemize}
\item $C_1 \cup \dotsb \cup C_n = S$, and
\item $s < t$ whenever $s \in C_i$, $t \in C_j$, and $i < j$.
\end{itemize}
We will call such a $C$ a \emph{division} of $S$. Let $\abs{C}$ denote the sequence $(\abs{C_1},\dotsc,\abs{C_n})$.

We say that an element $s \in S$ is \emph{admissible} with respect to $C$ if either $s$ is the smallest element of $C_1$, $s$ is the largest element of $C_n$, or $s \in C_i$ for $i \neq 1$, $n$. Given an admissible element $s$, we define the \emph{deletion} of $s$ from $C$ as follows. Let $i$ be such that $s \in C_i$, and let $C_i^- = \{t \in C_i \where t < s\}$ and $C_i^+ = \{t \in C_i \where t > s\}$. The deletion of admissible $s$ from $C$ results in a sequence of $n-1$ sets, denoted by $C^s = (C_1^s,\dotsc,C_{n-1}^s)$, given as follows:
\begin{itemize}
\item If $i = 1$, then $C^s = (C_1^+ \cup C_2, C_3, \dotsc, C_n)$.
\item If $i \neq 1$, $n$, then $C^s = (C_1, \dotsc, C_{i-2}, C_{i-1} \cup C_i^-, C_i^+ \cup C_{i+1}, C_{i+2}, \dotsc, C_n)$.
\item If $i = n$, then $C^s = (C_1, \dotsc, C_{n-2}, C_{n-1} \cup C_n^-)$.
\end{itemize}
In any case, $C^s$ is a division of $S \minus \{s\}$.

Suppose $s_1 \in S$ is admissible with respect to $C$, $s_2 \in S \minus \{s_1\}$ is admissible with respect to $C^{s_1}$, $s_3 \in S \minus \{s_1,s_2\}$ is admissible with respect to $(C^{s_1})^{s_2}$, and so on until $s_i$. Then we say that the sequence $s_1s_2 \dotsc s_i$ is admissible with respect to $C$ and write $((C^{s_1})^{s_2} \dotsc )^{s_i} = C^{s_1 \dotsb s_i}$. If a permutation $s_1 \dotsc s_n$ of $S$ is admissible with respect to $C$, then we call it a \emph{$C$-permutation}.
Note that the number of $C$-permutations depends only on $\abs{C}$.

\begin{exam} \label{example}
Suppose $n=5$ and $C = (\{1\},\emptyset,\{2,3\},\{4\},\{5\})$. The element 2 is admissible with respect to $C$, and $C^2 = (\{1\},\emptyset,\{3,4\},\{5\})$. The element 3 is admissible with respect to $C^2$, and $C^{23} = (\{1\},\emptyset,\{4,5\})$. The element 1 is admissible with respect to $C^{23}$, and $C^{231} = (\emptyset,\{4,5\})$. The element 5 is admissible with respect to $C^{231}$, and $C^{2315} = (\{4\})$. The element 4 is admissible with respect to $C^{2315}$. Hence 23154 is a $C$-permutation. The construction of this permutation is visualized below.

\begin{center}
\begin{tabular}{c}
1 \qquad $\emptyset$ \qquad \textbf{2}3 \qquad 4 \qquad 5 \\
1 \qquad $\emptyset$ \qquad \textbf{3}4 \qquad 5 \\
\textbf{1} \qquad $\emptyset$ \qquad 45 \\
$\emptyset$ \qquad 4\textbf{5} \\
\textbf{4}
\end{tabular}
\end{center}

On the other hand, 23145 is not a $C$-permutation because 4 is not admissible with respect to $C^{231} = (\emptyset,\{4,5\})$.
\end{exam}

\begin{exam} \label{examone}
Suppose $C = (\{1,\dotsc,n\},\emptyset,\dotsc,\emptyset)$. The only element admissible with respect to $C$ is 1, and $C^1 = (\{2,\dotsc,n\},\emptyset,\dotsc,\emptyset)$. The only element admissible with respect to $C^1$ is 2, and so on. Thus the only $C$-permutation is $12 \dotsc n$.

Similarly, if $C = (\emptyset,\dotsc,\emptyset,\{1,\dotsc,n\})$, then the only $C$-permutation is $n(n-1)\dotsc 1$.
\end{exam}

\begin{exam} \label{examall}
Suppose $C$ is a division of $S$ and $\abs{C} = (1,\dotsc,1)$. Then every element of $S$ is admissible with respect to $C$. Moreover, for any element $s \in S$, $C^s$ satisfies $\abs{C^s} = (1,\dotsc,1)$. So by induction, every permutation of $S$ is a $C$-permutation.
\end{exam}

\begin{exam} \label{exambinom}
Let $C$ be a division of the form $C = (C_1, \emptyset, \dotsc, \emptyset, C_n)$. Then the only admissible elements with respect to $C$ are the first element of $C_1$ and the last element of $C_n$. Furthermore, when we delete either of these elements, the resulting sequence of sets is again of the form $(C_1', \emptyset, \dotsc, \emptyset, C_{n-1}')$. So when we construct a $C$-permutation by successively deleting admissible elements, at each step we delete either the first element of the first set or the last element of the last set. Thus the $C$-permutations are the permutations where the  elements of $C_1$ appear in ascending order and the elements of $C_n$ appear in descending order.
\end{exam}

\begin{exam}
We will see from Corollary~\ref{eulerian} that if $1 \le k \le n$ and $C = (\emptyset^{k-1}, \{1,\dotsc,n\}, \emptyset^{n-k})$, then a permutation $w \in S_n$ is a $C$-permutation if and only if it has $k-1$ descents.
\end{exam}

We now state our main result.

\begin{thm} \label{main}
The number of $C$-permutations is $A_{\abs{C}}$.
\end{thm}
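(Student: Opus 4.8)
The plan is to prove the identity by induction on $n$, by showing that $A_{\abs{C}}$ satisfies the same recursion as the number of $C$-permutations, reformulated so as to be amenable to the geometry. Write $N(\abs{C})$ for the number of $C$-permutations (well-defined, since it depends only on $\abs{C}$), with $N$ of the empty sequence equal to $1$. The recursion I will use is: for $c_1 := \abs{C_1} \ge 1$,
\[
N(c_1,\dotsc,c_n) \;=\; \sum_{j}\binom{n}{j}\,N(c_1-1,c_2,\dotsc,c_{j-1})\cdot N(c_{j+1},\dotsc,c_n),
\]
summed over all $j\in\{1,\dotsc,n\}$ with $c_j=0$ (a vacuous condition when $j=1$) and $(c_1-1)+c_2+\dotsb+c_{j-1}=j-1$; the last condition guarantees that the two arguments on the right are genuine size-vectors of divisions, of a $(j-1)$-element and an $(n-j)$-element set respectively. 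The reflection $x\mapsto(1,\dotsc,1)-x$ of $\bb R^{n+1}$ carries $\Delta_{k,n}$ to $\Delta_{n+1-k,n}$, giving $A_{c_1,\dotsc,c_n}=A_{c_n,\dotsc,c_1}$ and an evident matching symmetry for $N$; so it suffices to treat $c_1\ge1$, the case $c_1=c_n=0$ requiring a separate, similar but more involved, computation with a distinguished $\Delta_{m,n}$, $1<m<n$. With the recursion in hand for both sides, the theorem follows by induction, the base cases $n\le 1$ being immediate.

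To establish the recursion for $A_{c_1,\dotsc,c_n}$ when $c_1\ge1$, put $f_n(\lambda) = \Vol(\lambda_1\Delta_{1,n}+\dotsb+\lambda_n\Delta_{n,n})$, so that $A_{c_1,\dotsc,c_n} = c_1!\dotsm c_n!\cdot[\lambda_1^{c_1}\dotsm\lambda_n^{c_n}]f_n$. By Proposition~\ref{pchar2} the polytope $P := \lambda_1\Delta_{1,n}+\dotsb+\lambda_n\Delta_{n,n}$ is a permutohedron, and differentiating $f_n$ in $\lambda_1$ and using the standard representation of a mixed volume as a sum over the facets of a Minkowski summand yields
\[
\frac{\partial f_n}{\partial\lambda_1} \;=\; \sum_{F}\, h_{\Delta_{1,n}}(u_F)\,\Vol_{n-1}(F),
\]
$F$ ranging over the facets of $P$ with outer unit normal $u_F$. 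Since $\Delta_{1,n}$ projects to a simplex with the origin as a vertex, $h_{\Delta_{1,n}}$ vanishes on the normals of exactly half of the facets of $P$ (those where some $\sum_{i\in T}x_i$ is \emph{minimized}) and equals $1/\sqrt{\abs{T}}$ on the normal proportional to $\sum_{i\in T}e_i$ of the facet $F_T$ where $\sum_{i\in T}x_i$ is maximized. As $P$ is a permutohedron, $F_T$ is a product, in the complementary coordinate subspaces indexed by $T$ and its complement, of two permutohedra whose normalized volumes are, by Propositions~\ref{pchar1} and~\ref{pchar2}, $f_{\abs{T}-1}(\lambda_1,\dotsc,\lambda_{\abs{T}-1})$ and $f_{n-\abs{T}}(\lambda_{\abs{T}+1},\dotsc,\lambda_n)$; the factor $\sqrt{\abs{T}}$ converting the induced Euclidean volume on $F_T$ to the normalized volume cancels the one in $h_{\Delta_{1,n}}(u_F)$. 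Summing over the $\binom nj$ subsets $T$ of each size $j$,
\[
\frac{\partial f_n}{\partial\lambda_1} \;=\; \sum_{j=1}^{n}\binom{n}{j}\,f_{j-1}(\lambda_1,\dotsc,\lambda_{j-1})\,f_{n-j}(\lambda_{j+1},\dotsc,\lambda_n),
\]
and extracting the coefficient of $\lambda_1^{c_1-1}\lambda_2^{c_2}\dotsm\lambda_n^{c_n}$ — observing that $\lambda_j$ does not occur in the $j$th summand, so only $j$ with $c_j=0$ contribute, with the $\lambda_1,\dotsc,\lambda_{j-1}$-degrees forced to total $j-1$ — gives the asserted recursion after clearing factorials.

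The remaining task, which I expect to be the crux, is the recursion for $N$. What is needed is a bijection, valid for $c_1\ge1$, between $C$-permutations and data consisting of a cut index $j$, a $j$-element subset of the ground set, a $C'$-permutation of a division of size $j-1$, and a $C''$-permutation of a division of size $n-j$, the binomial coefficient $\binom nj$ accounting for how the two sides interleave in the $C$-permutation. The structural inputs should be: no non-minimal element of $C_1$ is ever admissible, so the elements of $C_1$ appear in increasing order in any $C$-permutation and $\min C_1$ may be deleted at any stage; and a legal cut index $j$ records a point in the deletion process at which the division has broken into two parts — the bins up to position $j$ and those after — that thereafter evolve independently. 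This generalizes Example~\ref{exambinom}, where $C=(C_1,\emptyset,\dotsc,\emptyset,C_n)$ and the $C$-permutations are exactly the shuffles of the increasing listing of $C_1$ with the decreasing listing of $C_n$. The delicate points will be to recover the cut index intrinsically from a $C$-permutation, to verify that the two resulting pieces really are independent divisions, and to handle $c_1=c_n=0$ separately. As a consistency check, for $C=(C_1,\emptyset,\dotsc,\emptyset,C_n)$ with $\abs{C_1}=k$ the recursion has the single cut index $j=k$ and both factors on the right equal $1$, so it reduces to $A_{k,0,\dotsc,0,n-k}=\binom nk$, in agreement with the known value and with the shuffle count in Example~\ref{exambinom}.
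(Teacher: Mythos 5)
Your geometric half is essentially sound and genuinely different from the paper's: you differentiate $f_n$ in $\lambda_1$ and expand via Minkowski's facet formula, using the fact that (after a suitable translation) $h_{\Delta_{1,n}}$ vanishes exactly on the facet normals $u_T$ with $n+1\in T$, which correctly yields
\[
\frac{\partial f_n}{\partial\lambda_1}=\sum_{j=1}^n\binom{n}{j}f_{j-1}(\lambda_1,\dotsc,\lambda_{j-1})\,f_{n-j}(\lambda_{j+1},\dotsc,\lambda_n)
\]
(this checks out on $n=1,2$). But the proof as written has two genuine gaps, both of which you flag yourself. First and most seriously, the matching recursion for the number $N(\abs{C})$ of $C$-permutations is only asserted: the bijection between $C$-permutations and triples (cut index $j$, interleaving, pair of smaller permutations) is described in outline, but you do not define how to recover $j$ from a given $C$-permutation, nor verify that the deletion process genuinely factors into two independent divisions after the cut, nor that the correspondence is one-to-one. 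Since the entire content of the theorem is the match between the geometry and the combinatorics of $C$-permutations, this is the step that cannot be left as ``the delicate points will be,'' and it is not obviously easy -- the definition of $C$-permutations proceeds by single deletions, not by cuts, so the multiplicative recursion is not tautological. Second, the case $c_1=c_n=0$ is deferred to ``a separate, more involved computation'' with $\Delta_{m,n}$ for $1<m<n$; there the support function $h_{\Delta_{m,n}}$ does not vanish on half the facets, so the clean product formula breaks and a genuinely different argument is needed. Neither gap is filled.

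For contrast, the paper sidesteps both problems by slicing $\lambda_1\Delta_{1,n}+\dotsb+\lambda_n\Delta_{n,n}$ perpendicular to a coordinate axis (Proposition~\ref{propcross}, Corollary~\ref{corcross}) and integrating, which produces the \emph{additive} recursion
\[
A_{c_1,\dotsc,c_n}=A_{\abs{C^1}}+\sum_{i=2}^{n-1}\sum_{s\in C_i}A_{\abs{C^s}}+A_{\abs{C^n}},
\]
i.e.\ a sum over single admissible deletions. That recursion is satisfied by the number of $C$-permutations by the very definition of admissibility (a $C$-permutation is an admissible first letter followed by a $C^s$-permutation), so no combinatorial lemma is needed, and the slicing works uniformly for all $(c_1,\dotsc,c_n)$. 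If you want to salvage your route, the honest statement is that you would be proving a new (and interesting) product identity for mixed Eulerian numbers, and the combinatorial recursion for $N$ would have to be established by a careful induction of its own -- roughly as much work as the theorem itself.
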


\begin{proof}
Let
\begin{align*}
f_n(\lambda_1,\dotsc,\lambda_n) &= \Vol(\lambda_1\Delta_{1,n} + \lambda_2\Delta_{2,n} + \dotsb + \lambda_n\Delta_{n,n}) \\
&= \sum_{c_1+\dotsb+c_n=n} \frac{1}{c_1! \dotsb c_n!} A_{c_1,\dotsc,c_n} \lambda_1^{c_1} \dotsb \lambda_n^{c_n}
\end{align*}
so that
\[
A_{c_1,\dotsc,c_n} = \partial_1^{c_1} \dotsb \partial_n^{c_n} f_n.
\]
The idea of the proof is to write a recursive formula for $f_n$. To do this, we make the following observation:

\begin{prop} \label{propcross}
Let $y_1 \ge \dotsb \ge y_{n+1}$ be real numbers, and let $P = P(y_1,\dotsc,y_{n+1})$. Fix a real number $y_{n+1} \le x \le y_1$, and let $P_x$ denote the cross section of $P$ with the first coordinate equal to $x$. Let $1 \le i \le n$ be such that $y_{i+1} \le x \le y_i$. Then $P_x$ is equal to
\[
\{x\} \times P(y_1,\dotsc,y_{i-1},y_i+y_{i+1}-x,y_{i+2},\dotsc,y_{n+1}).
\]
\end{prop}

\begin{proof}
By Proposition~\ref{pchar1}, $P_x$ is the set of points $(x,x_2,\dotsc,x_{n+1}) \in \bb R^{n+1}$ such that for all $1 \le k \le n-1$ and $k$-element subsets $\{i_1,\dotsc,i_k\} \subset \{2,\dotsc,n+1\}$, we have
\[
x_{i_1} + \dotsb + x_{i_k} \le \min(y_1+\dotsb+y_k, y_1+\dotsb+y_{k+1}-x)
\]
and
\[
x_2 + \dotsb + x_{n+1} = y_1 + \dotsb + y_{n+1}-x.
\]
We have $y_1 + \dotsb + y_k \le y_1 + \dotsb + y_{k+1} - x$ if and only if $x \le y_{k+1}$. Hence, $P_x$ is the set of points $(x,x_2,\dotsc,x_{n+1}) \in \bb R^{n+1}$ such that for all $1 \le k \le n-1$ and $k$-element subsets $\{i_1,\dotsc,i_k\} \subset \{2,\dotsc,n+1\}$, we have
\begin{align*}
x_{i_1} + \dotsb + x_{i_k} &\le y_1 + \dotsb + y_k & \text{ if } x \le y_{k+1} \\
x_{i_1} + \dotsb + x_{i_k} &\le y_1 + \dotsb + y_{k+1} - x & \text{ if } x \ge y_{k+1}
\end{align*}
and
\[
x_2 + \dotsb + x_{n+1} = y_1 + \dotsb + y_{n+1} - x.
\]
By Proposition~\ref{pchar1}, this is precisely the description of
\[
\{x\} \times P(y_1,\dotsc,y_{i-1},y_i+y_{i+1}-x,y_{i+2},\dotsc,y_{n+1}),
\]
as desired.
\end{proof}

\begin{cor} \label{corcross}
Let $\lambda_1$, \dots, $\lambda_n$ be nonnegative real numbers. Fix a real number $0 \le x \le \lambda_1 + \dotsb + \lambda_n$, and let $1 \le i \le n$ be such that $\lambda_{i+1} + \dotsb + \lambda_n \le x \le \lambda_i + \dotsb + \lambda_n$ (where $0 \le x \le \lambda_n$ if $i = n$). Set $t = \lambda_i + \dotsb + \lambda_n - x$. Then the cross section of
\[
\lambda_1\Delta_{1,n} + \lambda_2\Delta_{2,n} + \dotsb + \lambda_n\Delta_{n,n}
\]
with first coordinate equal to $x$ is equal to $\{x\} \times Q$, where $Q$ is the following polytope in the following cases:
\begin{itemize}
\item If $i=1$,
\[
(t+\lambda_2)\Delta_{1,n-1} + \lambda_3\Delta_{2,n-1} + \dotsb + \lambda_n\Delta_{n-1,n-1}.
\]
\item If $2 \le i \le n-1$,
\begin{multline*}
\lambda_1\Delta_{1,n-1} + \dotsb + \lambda_{i-2}\Delta_{i-2,n-1} + (\lambda_{i-1} + \lambda_i - t)\Delta_{i-1,n-1} \\
+ (t + \lambda_{i+1})\Delta_{i,n-1} + \lambda_{i+2}\Delta_{i+1,n-1} + \dotsb + \lambda_n\Delta_{n-1,n-1}.
\end{multline*}
\item If $i=n$,
\[
\lambda_1\Delta_{1,n-1} + \dotsb + \lambda_{n-2}\Delta_{n-2,n-1} + (\lambda_{n-1}+\lambda_n-t)\Delta_{n-1,n-1}.
\]
\end{itemize}
\end{cor}

\begin{proof}
This follows by translating Proposition~\ref{propcross} through Proposition~\ref{pchar2}.
\end{proof}

Corollary~\ref{corcross} now gives the following formula for $f_n$:
\begin{prop} \label{intrecursion}
We have
\begin{align*}
f_n(\lambda_1,\dotsc,\lambda_n) ={ }& \int_0^{\lambda_1} f_{n-1}(t+\lambda_2,\lambda_3,\dotsc,\lambda_n)\,dt \\
& + \sum_{i = 2}^{n-1} \int_0^{\lambda_i} f_{n-1}(\lambda_1, \dotsc, \lambda_{i-2}, \lambda_{i-1}+\lambda_i-t, t+\lambda_{i+1}, \lambda_{i+2}, \dotsc, \lambda_n)\,dt \\
& + \int_0^{\lambda_n} f_{n-1}(\lambda_1,\dotsc,\lambda_{n-2},\lambda_{n-1}+\lambda_n-t)\,dt.
\end{align*}
\end{prop}

Now, we wish to use this formula to calculate $\partial_1^{c_1} \dotsb \partial_n^{c_n} f_n$. We use the ``differentiation under the integral'' rule: For smooth functions $f(x)$ and $g(x,t)$, we have
\[
\frac{d}{dx} \int_0^{f(x)} g(x,t)\, dt = f'(x)g(f(x),t) + \int_0^{f(x)} \frac{\partial}{\partial x} g(x,t)\, dt.
\]
It follows that for $2 \le i \le n-1$, we have
\begin{multline} \label{differentiate}
\left( \frac{\partial}{\partial \lambda_i} \right)^{c_i} \int_0^{\lambda_i} f_{n-1}(\lambda_1,\dotsc,\lambda_{i-1}+\lambda_i-t,t+\lambda_{i+1},\dotsc,\lambda_n)\, dt \\
= \sum_{r=0}^{c_i-1} \partial_{i-1}^r \partial_i^{c_i-r-1} f_{n-1}(\lambda_1,\dotsc,\lambda_i+\lambda_{i+1},\dotsc,\lambda_n) \\
+ \int_0^{\lambda_i} \partial_{i-1}^{c_i} f_{n-1}(\lambda_1,\dotsc,\lambda_{i-1}+\lambda_i-t,t+\lambda_{i+1},\dotsc,\lambda_n)\, dt
\end{multline}
and hence
\begin{align*}
\left( \frac{\partial}{\partial \lambda_1} \right)^{c_1} \dotsb \left( \frac{\partial}{\partial \lambda_n} \right)^{c_n} \int_0^{\lambda_i} f_{n-1}(\lambda_1,\dotsc&\,,\lambda_{i-1}+\lambda_i-t,t+\lambda_{i+1},\dotsc,\lambda_n)\, dt \\
&= \sum_{r=0}^{c_i-1} \partial_1^{c_1} \dotsb \partial_{i-1}^{c_{i-1}} \partial_{i-1}^r \partial_i^{c_i-r-1} \partial_i^{c_{i+1}} \dotsb \partial_{n-1}^{c_n} f_{n-1} \\
&= \sum_{r=0}^{c_i-1} A_{c_1, \dotsc, c_{i-2}, c_{i-1}+r, c_i-r-1+c_{i+1}, c_{i+2}, \dotsc, c_n} \\
&= \sum_{s \in C_i} A_{\abs{C^s}}
\end{align*}
where $C$ is a division with $\abs{C} = (c_1,\dotsc,c_n)$. Note that the final term of \eqref{differentiate} vanishes after differentiation because $f_{n-1}$ is a polynomial of degree $n-1$.

By similar (and simpler) calculations, we have
\begin{align*}
\left( \frac{\partial}{\partial \lambda_1} \right)^{c_1} \dotsb \left( \frac{\partial}{\partial \lambda_n} \right)^{c_n} \int_0^{\lambda_1} f_{n-1}(t+\lambda_2,\lambda_3,\dotsc,\lambda_n)\, dt &= A_{c_1+c_2-1,c_3,\dotsc,c_n} \\
&= A_{\abs{C^1}}
\end{align*}
and
\begin{align*}
\left( \frac{\partial}{\partial \lambda_1} \right)^{c_1} \dotsb \left( \frac{\partial}{\partial \lambda_n} \right)^{c_n} \int_0^{\lambda_n} f_{n-1}(\lambda_1,\dotsc,\lambda_{n-2},\lambda_{n-1}+\lambda_n-t)\, dt &= A_{c_1,\dotsc,c_{n-1}+c_n-1} \\
&= A_{\abs{C^n}}.
\end{align*}
Combining these calculations with Proposition~\ref{intrecursion}, we obtain
\[
A_{c_1,\dotsc,c_n} = A_{\abs{C^1}} + \sum_{i=2}^{n-1}\sum_{s \in C_i} A_{\abs{C^s}} + A_{\abs{C^n}}.
\]
The desired result now follows by induction with the base case $A_1 = 1$.
\end{proof}

While $C$-permutations are defined recursively in general, there are certain cases where more explicit descriptions can be given. This allows us to derive various formulas for mixed Eulerian numbers, which we do in Section~\ref{sec:properties}.

\subsection{Index functions and superdiagonality}

We will also associate each $C$-permutation with a function which we call an ``index function''. For some applications, this function will be more useful to work with than the permutation itself. This section will only be used for Section~\ref{proofineq} and can be returned to later.

Let $C = (C_1,\dotsc,C_n)$ be a division of $S$ and let $w = w_1 \dotsc w_n$ be a $C$-permutation. For each $1 \le i \le n$, the \emph{index} of $w_i$ in $w$ with respect to $C$ is the $j$ such that $w_i \in C^{w_1w_2\dotsc w_{i-1}}_j$. In other words, $j$ is the index of the set containing $w_i$ immediately before we delete $w_i$. Let $I^C_w : S \to \bb N$ be the function which takes each $s \in S$ to its index in $w$ with respect to $C$. Note that if $s \in C_i$, then $I^C_w(s) \in \{1,\dotsc,i\}$. We will call \emph{any} function $I : S \to \bb N$ which maps $C_i$ into $\{1,\dotsc,i\}$ and \emph{index function} of $C$.

\begin{exam}
Let $C = (\{1\},\emptyset,\{2,3\},\{4\},\{5\})$ and $w = 23154$ as in Example~\ref{example}. Then $I^C_w(2) = 3$, $I^C_w(3) = 3$, $I^C_w(1) = 1$, $I^C_w(5) = 2$, and $I^C_w(4) = 1$.
\end{exam}

\begin{exam} \label{indexeulerian}
Let $C = (\emptyset^{k-1}, \{1,\dotsc,n\}, \emptyset^{n-k})$ and let $w$ be a $C$-permutation. By Corollary~\ref{eulerian}, we can uniquely write $w = \overline{w_1} \ \overline{w_2} \dotsc \overline{w_{k}}$ where each $\overline{w_i}$ is an increasing sequence and $w$ is the concatenation of these sequences. Then by Proposition~\ref{descents}, if $s$ is a term in $\overline{w_i}$, then $I_w^C(s) = k-i+1$.
\end{exam}

We introduce some final terminology. Call a division $C$ \emph{superdiagonal} if $\abs{C_1} + \dotsb + \abs{C_i} \ge i$ for all $i$. Call a division \emph{subdiagonal} if $\abs{C_n} + \abs{C_{n-1}} + \dotsb + \abs{C_{n-i+1}} \ge i$ for all $i$. We make the following observation, which is easy to check.

\begin{prop} \label{superdiagonal}
If $C$ is a superdiagonal (resp., subdiagonal) division of $S$, then for any admissible $s \in S$, $C^s$ is also superdiagonl (resp., subdiagonal).
\end{prop}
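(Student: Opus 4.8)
The plan is to verify the claim directly from the definitions of superdiagonal and subdiagonal, handling the three cases of the deletion operation. Write $c_i = \abs{C_i}$ and, given admissible $s \in C_i$, write $a = \abs{C_i^-}$ and $b = \abs{C_i^+}$, so that $a + b = c_i - 1$. The partial sums of $\abs{C^s}$ are obtained from those of $\abs{C} = (c_1,\dotsc,c_n)$ by a single ``shift'': deleting one element and merging two adjacent blocks.

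For the superdiagonal case, suppose $\abs{C_1} + \dotsb + \abs{C_j} \ge j$ for all $j$. First I would dispose of the easy case $i = 1$: here $\abs{C^s} = (c_1 + c_2 - 1, c_3, \dotsc, c_n)$, and its partial sums are $(c_1 + \dotsb + c_{j+1}) - 1 \ge (j+1) - 1 = j$ for all relevant $j$, using the hypothesis at index $j+1$. For $2 \le i \le n-1$, we have
\[
\abs{C^s} = (c_1, \dotsc, c_{i-2}, c_{i-1} + a, b + c_{i+1}, c_{i+2}, \dotsc, c_n).
\]
The partial sums at positions $j < i-1$ are unchanged, hence still $\ge j$. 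The partial sum at position $i-1$ (the block $C_{i-1} \cup C_i^-$) equals $c_1 + \dotsb + c_{i-1} + a \ge (i-1) + a \ge i-1$. At position $i$ (after the block $C_i^+ \cup C_{i+1}$) it equals $c_1 + \dotsb + c_{i+1} - 1 \ge (i+1) - 1 = i$, and for $j > i$ it equals $c_1 + \dotsb + c_{j+1} - 1 \ge j$. The case $i = n$ is analogous, with the last block becoming $C_{n-1} \cup C_n^-$ of size $c_{n-1} + a$; partial sums for $j < n-1$ are unchanged, and at position $n-1$ it is $c_1 + \dotsb + c_{n-1} + a \ge n-1$. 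The subdiagonal case follows by the symmetric argument, reading blocks from the right: reversing the order of a division and complementing within $S$ swaps ``superdiagonal'' with ``subdiagonal'' and commutes with deletion (with admissibility preserved), so it suffices to observe this symmetry and invoke the superdiagonal case, or simply repeat the computation above for suffix sums.

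I do not expect any genuine obstacle here; the statement is, as the paper says, ``easy to check.'' The only thing to be careful about is bookkeeping in the $2 \le i \le n-1$ case — specifically, making sure the index shift in the partial sums is tracked correctly at the two merged blocks and that one uses the hypothesis at the right index ($j+1$ rather than $j$) once past the deletion point. Writing out the three cases explicitly, as above, makes this routine.
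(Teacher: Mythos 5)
Your case analysis is correct and complete: the partial sums of $\abs{C^s}$ either coincide with those of $\abs{C}$, exceed the old ones by $a \ge 0$, or equal $c_1 + \dotsb + c_{j+1} - 1 \ge j$, and the subdiagonal case does follow by the reversal symmetry. The paper omits the proof entirely (it is stated as ``easy to check''), and your verification is precisely the intended routine argument.
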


The following is the main result on index functions, which we prove in the next section.

\begin{prop} \label{injection}
Let $C = (C_1,\dotsc,C_n)$ be a division of $S$. Then the map $w \mapsto I^C_w$ is an injection from the set of $C$-permutations to the set of index functions of $C$. This map is a bijection if and only if $C$ is superdiagonal.
\end{prop}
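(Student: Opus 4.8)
The plan is to establish injectivity and the characterization of bijectivity separately, both by induction on $n$ using the recursive structure already set up in the proof of Theorem~\ref{main}.

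\textbf{Injectivity.} First I would show that $w$ can be recovered from $I^C_w$. The key observation is that the first letter $w_1$ of a $C$-permutation is always an admissible element of $C$, and the index $I^C_w(w_1)$ tells us \emph{which} block of $C$ it came from: if $I^C_w(w_1) = 1$ then $w_1$ is the smallest element of $C_1$, if $I^C_w(w_1) = n$ then $w_1$ is the largest element of $C_n$, and otherwise $w_1 \in C_j$ with $j = I^C_w(w_1)$. The subtlety is that when $1 < I^C_w(w_1) < n$, an interior block $C_j$ may contain more than one element, so the index alone does not pin down which element of $C_j$ is $w_1$. I would resolve this by a secondary argument: once we know $w_1 \in C_j$, the restriction of $I^C_w$ to the remaining elements is an index function of $C^{w_1}$, and $C^{w_1}$ merges $C_{j-1}\cup C_j^-$ into block $j-1$ and $C_j^+ \cup C_{j+1}$ into block $j$. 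Any two distinct choices of $w_1$ within $C_j$ produce index functions of genuinely different divisions, yet I want to compare them as functions on $S\setminus\{w_1\}$ versus $S\setminus\{w_1'\}$ — so instead I would argue directly on $S$: suppose $w\neq w'$ are $C$-permutations with $I^C_w = I^C_{w'}$, take the first position where they differ, and derive a contradiction from the deletion rules. The cleanest route is actually the reverse: reconstruct $w$ from $I := I^C_w$ greedily. Among all elements $s$ with the property that $s$ is admissible for $C$ \emph{and} $I(s)$ is consistent with $s$'s position (smallest of $C_1$, largest of $C_n$, or interior), I claim exactly one can legitimately be $w_1$; then recurse on $C^{w_1}$ with $I$ restricted. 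To see uniqueness within an interior block $C_j$: if $s < s'$ are both in $C_j$ and both hypothetically equal $w_1$, then in the actual $C$-permutation with first letter $s'$, the element $s$ lies in $C_j^- \subset C^{s'}_{j-1}$, so $I^C_w(s) \le j-1 < j$; contradiction with $I(s)$ needing to equal $j$ only if... — here I need the precise bookkeeping, and I would phrase it as: the smallest-indexed feasible admissible element is forced, because any admissible $s\in C_j$ that is not the minimum of the "currently relevant" portion gets a strictly smaller index after a competing deletion. I expect this disambiguation step to be the main obstacle, and I would handle it by showing that within each block, the index function is strictly increasing along the order in which elements get deleted in a way that makes $w_1$ the unique minimizer.

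\textbf{Surjectivity when $C$ is superdiagonal.} By Proposition~\ref{superdiagonal}, superdiagonality is preserved under deletion, so induction is available. Given an arbitrary index function $I$ of a superdiagonal $C$, I must produce a $C$-permutation $w$ with $I^C_w = I$. I would pick $w_1$ to be: the smallest element of $C_1$ if $C_1 \neq \emptyset$ and we "want" index $1$ for it — but more carefully, I choose $w_1$ among admissible elements so that deleting it and restricting $I$ yields an index function of $C^{w_1}$, then invoke the inductive hypothesis on $C^{w_1}$. The count $A_{|C|}$ from Theorem~\ref{main} gives a sanity check: the number of index functions of $C$ is $\prod_i i^{|C_i|} = 1^{|C_1|}2^{|C_2|}\cdots n^{|C_n|}$, and by the formula $A_{c_1,\dots,c_n} = 1^{c_1}\cdots n^{c_n}$ when $C$ is superdiagonal (listed in the introduction, to be reproven in Section~\ref{sec:properties}), the injection is then forced to be a bijection — but since that identity is itself downstream, I will instead give the direct surjectivity argument: if $C$ is superdiagonal and $C_1\neq\emptyset$, then $C_1$ has at least one element and its smallest element is admissible; handle the interior blocks by noting that admissibility is automatic there; and verify that restricting $I$ is consistent with the block-merging in the definition of $C^s$.

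\textbf{Non-surjectivity when $C$ is not superdiagonal.} For the converse, I would show that if $|C_1| + \dots + |C_i| < i$ for some $i$, then the injection misses some index function, equivalently the number of $C$-permutations is strictly less than $\prod_j j^{|C_j|}$. The slick argument is a dimension/support count: in any $C$-permutation, consider the indices assigned — if the first $i$ blocks are "too small," then some element far to the right is forced to have its index drop below what a free index function would allow, because there aren't enough early-block elements to keep it in a high-indexed block through the deletion process. Concretely, I would exhibit a specific index function with no preimage: e.g., assign to every element of $C_n$ the index $n$, to every element of $C_{n-1}$ the index $n-1$, and so on; then show that realizing all of these simultaneously forces, at some deletion step, an element into a merged block of index smaller than demanded, precisely when the superdiagonal inequality fails at the relevant threshold. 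I expect this direction to require identifying the right "obstruction" index function and tracking one failing inequality through the recursion, which should follow the same inductive skeleton as the positive direction.
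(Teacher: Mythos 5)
Your overall architecture (greedy reconstruction of $w_1$ from $I$, induction via Proposition~\ref{superdiagonal} for surjectivity, an explicit unrealizable index function for the converse) matches the paper's, but the one idea that makes the injectivity argument actually close is missing, and your tentative substitute points the wrong way. The correct selection rule is: let $i$ be the \emph{largest} index such that some $s \in C_i$ has $I(s) = i$, and let $s$ be the \emph{smallest} such element of $C_i$; then $w_1 = s$. This rests on two monotonicity facts about deletion: (i) deleting any element from block $i_1$ strictly decreases the index of every element currently lying in a block of index greater than $i_1$ (and indices never increase), so if $w_1$ lay in a block of index less than $i$, the element $s$ could never be deleted at index $i$, contradicting $I(s)=i$; and (ii) deleting $w_1$ from $C_i$ pushes every smaller element of $C_i$ into $C^{w_1}_{i-1}$, ruling out any $s' < w_1$ in $C_i$ with $I(s')=i$. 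You have observation (ii) (your remark that $s$ lands in $C_j^- \subset C^{s'}_{j-1}$) but not (i), and without (i) you cannot disambiguate between candidate first letters sitting in \emph{different} blocks; your phrase ``the smallest-indexed feasible admissible element is forced'' is at best ambiguous and on its face selects from the wrong end --- it is the highest-indexed block that wins. The same selection rule drives surjectivity, where you also skip a necessary check: the chosen element must be \emph{admissible}, which is automatic for interior blocks and for the minimum of $C_1$ but not for $C_n$; the paper gets this from superdiagonality forcing $\abs{C_n}\le 1$. You never address the case where the chosen element lies in the last block, nor do you verify (as the paper does, using the extremality of the choice) that the restricted $I$ is an index function of $C^s$.

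For the converse, your candidate obstruction ($I(s)=j$ for $s\in C_j$) does work, but proving it unrealizable requires tracking the rightmost nonempty block through the entire deletion process. The paper's choice $I\equiv 1$ is markedly cleaner: $I_w\equiv 1$ forces every deletion to be from the first block, hence $w$ must always delete the smallest remaining element, and the first failure $\abs{C_1}+\dotsb+\abs{C_i}<i$ immediately strands the smallest element of $C_{i+1}$ outside the first block. As written, both halves of your argument stop exactly where the content begins, so this is a genuine gap rather than an alternative route.
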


\section{Properties of mixed Eulerian numbers} \label{sec:properties}

Using algebraic and geometric techniques, Postnikov proved the following properties of mixed Eulerian numbers.

\begin{thm}[Postnikov \cite{Post}] \label{properties}
The mixed Eulerian numbers have the following properties:
\renewcommand{\theenumi}{\alph{enumi}}
\renewcommand{\labelenumi}{(\theenumi)}
\begin{enumerate}
\item\label{propintegers} The numbers $A_{c_1,\dotsc,c_n}$ are positive integers defined for $c_1$, \dots, $c_n \ge 0$, $c_1 + \dotsb + c_n = n$.
\item\label{propreverse} We have $A_{c_1,\dotsc,c_n} = A_{c_n,\dotsc,c_1}$.
\item\label{propeulerian} For $1 \le k \le n$, the number $A_{0^{k-1},n,0^{n-k}}$ equals the usual Eulerian number $A(n,k)$. Here, $0^l$ denotes a sequence of $l$ zeroes.
\item\label{propweightedsum} We have $\sum \frac{1}{c_1! \dotsb c_n!} A_{c_1,\dotsc,c_n} = (n+1)^{n-1}$, where the sum is over nonnegative integer sequences $c_1$, \dots, $c_n$ with $c_1 + \dotsb + c_n = n$.
\item\label{propsum} We have $\sum A_{c_1,\dotsc,c_n} = n! C_n$, where the sum is over nonnegative integer sequences $c_1$, \dots, $c_n$ with $c_1 + \dotsb + c_n = n$, and $C_n = \frac{1}{n+1} \binom{2n}{n}$ is the $n$-th Catalan number.
\item\label{propadjacent} For $2 \le k \le n$ and $0 \le r \le n$, the number $A_{0^{k-2},r,n-r,0^{n-k}}$ is equal to the number of permutations $w \in S_{n+1}$ with $k-1$ descents and $w_1 = r+1$.
\item\label{propall} We have $A_{1,\dotsc,1} = n!$.
\item\label{propbinom} We have $A_{k,0,\dotsc,0,n-k} = \binom{n}{k}$.
\item\label{propeq} We have $A_{c_1,\dotsc,c_n} = 1^{c_1}2^{c_2} \dotsb n^{c_n}$ is $c_1 + \dotsb + c_i \ge i$ for all $i$.
\end{enumerate}
\end{thm}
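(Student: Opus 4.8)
The plan is to derive each of the nine properties from Theorem~\ref{main}, that is, from the fact that $A_{\abs{C}}$ counts $C$-permutations, using the combinatorial machinery already set up: the deletion recursion, the examples worked out in Section~3.1, and (for the sum formulas) the recursion $A_{c_1,\dotsc,c_n} = A_{\abs{C^1}} + \sum_{i=2}^{n-1}\sum_{s \in C_i} A_{\abs{C^s}} + A_{\abs{C^n}}$ established inside the proof of Theorem~\ref{main}. Several items are already essentially done in the excerpt: \ref{propintegers} is immediate since $C$-permutations are a finite set and there is at least one (peel off admissible elements greedily, which is always possible because $C_1$ has a smallest element when nonempty, $C_n$ a largest when nonempty, and if both are empty some middle $C_i$ is nonempty and all its elements are admissible); \ref{propeulerian} will be recorded separately as Corollary~\ref{eulerian}; \ref{propall} is exactly Example~\ref{examall} ($\abs{C}=(1,\dotsc,1)$ makes every permutation a $C$-permutation, so we get $n!$); \ref{propbinom} follows from Example~\ref{exambinom}, where $C=(C_1,\emptyset,\dotsc,\emptyset,C_n)$ with $\abs{C_1}=k$, $\abs{C_n}=n-k$, and the $C$-permutations are exactly the shuffles of an ascending block of size $k$ with a descending block of size $n-k$, of which there are $\binom{n}{k}$; and \ref{propeq} follows from Example~\ref{examone}-type reasoning combined with Proposition~\ref{superdiagonal} and Proposition~\ref{injection}: when $C$ is superdiagonal the map $w \mapsto I_w^C$ is a bijection onto index functions, and the number of index functions is $\prod_i$ (number of choices for each element of $C_i$) $= \prod_i i^{\abs{C_i}} = 1^{c_1} 2^{c_2}\dotsb n^{c_n}$.

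For property~\ref{propreverse}, I would exhibit an explicit bijection between $C$-permutations and $C^{\mathrm{rev}}$-permutations, where $C^{\mathrm{rev}} = (C_n, C_{n-1}, \dotsc, C_1)$ with the order on $S$ reversed. The natural candidate is to reverse the order of $S$, which interchanges "smallest element of $C_1$'' with "largest element of $C_n$'' and fixes the admissibility of middle elements, and check that deletion commutes with this reversal; then $w_1 \dotsc w_n$ admissible for $C$ corresponds to $w_n \dotsc w_1$ admissible for $C^{\mathrm{rev}}$. Since $\abs{C^{\mathrm{rev}}} = (c_n,\dotsc,c_1)$, this gives $A_{c_1,\dotsc,c_n} = A_{c_n,\dotsc,c_1}$. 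Property~\ref{propadjacent} is the Ehrenborg--Readdy--Steingr\'imsson case: here $\abs{C} = (0^{k-2}, r, n-r, 0^{n-k})$, so $S = C_{k-1} \cup C_k$ with $\abs{C_{k-1}} = r$, $\abs{C_k} = n-r$ (all other blocks empty, so for this to make sense we read off which elements are admissible directly). I would show by unwinding the deletion rules that a $C$-permutation here is determined by the same data as a permutation of $[n+1]$ with $k-1$ descents and first letter $r+1$; the cleanest route is probably to reduce to the $k=2$ or small-$k$ situation and use the already-to-be-proved Corollary~\ref{eulerian} together with a descent-tracking argument via index functions (Example~\ref{indexeulerian} and the forthcoming Proposition~\ref{descents} about descents). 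I expect this to be the most delicate of the items to state cleanly, since it requires matching a recursively defined set against a classically defined one.

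For the two global sum identities \ref{propweightedsum} and \ref{propsum}, I would work at the level of the generating function $f_n$ or, more combinatorially, sum the recursion over all $C$. For \ref{propsum}, summing $A_{\abs{C}}$ over all $\abs{C}$ with $c_1+\dotsb+c_n = n$ is the same as fixing $S = [n]$ and summing, over all divisions $C$ of $[n]$, the number of $C$-permutations; the recursion $A_{c_1,\dotsc,c_n} = A_{\abs{C^1}} + \sum_{i=2}^{n-1}\sum_{s\in C_i} A_{\abs{C^s}} + A_{\abs{C^n}}$ then says that the total count $T_n := \sum_{|\abs{C}|} A_{\abs{C}}$ satisfies a recursion of the form $T_n = \sum_{j} (\text{multiplicity}) \, T_{n-1}$ after one tracks how many $(C,s)$ pairs on $[n]$ produce each division of $[n-1]$; identifying this with $T_n = n!\,C_n$ amounts to checking $n!C_n = (\text{something}) \cdot (n-1)! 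C_{n-1}$, i.e. matching against the Catalan recursion or the Catalan-number identity $C_n = \sum C_i C_{n-1-i}$. For \ref{propweightedsum}, the weighting $1/(c_1!\dotsb c_n!)$ is exactly what appears in the expansion of $f_n$, and setting $\lambda_1 = \dotsb = \lambda_n = 1$ in the identity $f_n(\lambda_1,\dotsc,\lambda_n) = \sum \frac{1}{c_1!\dotsb c_n!} A_{\abs{C}} \lambda_1^{c_1}\dotsb\lambda_n^{c_n}$ gives $\sum \frac{1}{c_1!\dotsb c_n!}A_{\abs{C}} = f_n(1,\dotsc,1) = \Vol\big(\Delta_{1,n} + \dotsb + \Delta_{n,n}\big) = \Vol P(n, n-1, \dotsc, 1, 0)$ by Proposition~\ref{pchar2}, and $n!\,\Vol$ of the standard permutohedron $P(n,\dotsc,1,0)$ is the well-known count $(n+1)^{n-1}$ of forests/parking functions; I would cite this classical fact rather than reprove it, or alternatively note that it also drops out of Proposition~\ref{intrecursion} at $\lambda_i \equiv 1$. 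The main obstacle overall is property~\ref{propadjacent}: the other eight either are verbatim consequences of the worked examples, follow by a clean reversal bijection, or reduce to classical volume computations, whereas matching the recursively built $C$-permutations for $\abs{C} = (0^{k-2}, r, n-r, 0^{n-k})$ to descent-and-first-letter statistics on $S_{n+1}$ needs a genuine structural argument.
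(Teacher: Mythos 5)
Most of your proposal tracks the paper: (a), (g), (h), (i) are handled exactly as you describe (via Examples~\ref{examone}--\ref{exambinom} and Proposition~\ref{injection}), (c) is Corollary~\ref{eulerian}, and for (d) the paper likewise falls back on the classical volume $(n+1)^{n-1}$ of the permutohedron rather than giving a combinatorial proof. The genuine gap is property~(e). Your plan is to sum the deletion recursion $A_{\abs{C}} = \sum_{s} A_{\abs{C^s}}$ over all divisions $C$ of $[n]$ and obtain $T_n = (\text{multiplicity})\cdot T_{n-1}$, where $T_n = \sum A_{c_1,\dotsc,c_n}$. But the multiplicity $m(D) = \#\{(C,s) : C^s = D\}$ is \emph{not} independent of the division $D$ of $[n-1]$, so no such clean recursion exists: one has $T_2 = 4$ and $T_3 = 30$, and $30/4$ is not an integer, so $T_3 = \sum_D m(D)\,A_{\abs{D}}$ cannot collapse to a constant multiple of $T_2$. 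Your fallback of "matching against $C_n = \sum C_iC_{n-1-i}$" is not connected to the deletion recursion in any way you make precise. The paper's actual proof (Theorem~\ref{cycle}) is a genuinely different device: it places the $n+1$ blocks $C_1,\dotsc,C_n,\emptyset$ on a circle, shows that every permutation of $[n]$ leaves exactly one originally-empty block at the end of the cyclic deletion process, and thereby partitions $S_n$ so that each cyclic equivalence class of compositions contributes exactly $n!$; since there are $C_n$ classes, $T_n = n!\,C_n$. You would need to find this (or an equivalent) idea; summing the recursion will not do it.

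Two smaller points. For (f), you correctly identify it as the delicate item but supply no argument; the paper proves the stronger Proposition~\ref{generaleulerian} by a direct induction on $n$, using that deleting $w_1$ from a division with $\abs{C} = (0^{k-2},r,n-r,0^{n-k})$ yields one of the same shape with parameters $(0^{k-3}, w_1-1, n-w_1, 0^{n-k})$, and tracking whether $w_1 \le r$ creates the extra descent against the prepended letter $\lambda$. Your proposed route through index functions and Proposition~\ref{descents} is not developed enough to evaluate. For (b), your reversal bijection is misstated: after reversing both the order on $S$ and the sequence of blocks, the \emph{same} deletion sequence $w_1\dotsc w_n$ is admissible for $(C_n,\dotsc,C_1)$ (the smallest element of $C_1$ becomes the largest element of the last block, and deletion commutes with the reversal), so the bijection is the identity on permutations, not $w \mapsto w_n\dotsc w_1$; for instance with $\abs{C}=(2,0)$ the unique $C$-permutation $12$ is also the unique $(C_2,C_1)$-permutation under the reversed order, whereas $21$ is not one.
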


\begin{thm}[Postnikov \cite{Post}] \label{cycle}
Let $\sim$ denote the equivalence relation on the set of nonnegative integer sequences $(c_1,\dotsc,c_n)$ with $c_1 + \dotsb + c_n = n$ given by $(c_1,\dotsc,c_n) \sim (c_1',\dotsc,c_n')$ whenever $(c_1,\dotsc,c_n,0)$ is a cyclic shift of $(c_1',\dotsc,c_n',0)$. Then for a fixed $(c_1,\dotsc,c_n)$, we have
\[
\sum_{(c_1',\dotsc,c_n') \sim (c_1,\dotsc,c_n)} A_{c_1',\dotsc,c_n'} = n!.
\]
\emph{Note:} There are exactly $C_n = \frac{1}{n+1}\binom{2n}{n}$ equivalence classes.
\end{thm}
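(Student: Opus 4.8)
The plan is to reduce this to Theorem~\ref{main}. Write $m = n+1$, and view a nonnegative integer sequence $\mathbf a = (a_1,\dots,a_m)$ with $a_1 + \dots + a_m = n$ as a necklace, reading indices modulo $m$. Since $n < m$, at least one entry vanishes, and the sequences $c'$ in the equivalence class of $(c_1,\dots,c_n)$ are exactly the windows $c^{(j)} := (a_{j+1},\dots,a_{j+n})$ as $j$ ranges over the zero positions of the necklace $\mathbf a = (c_1,\dots,c_n,0)$. One first checks that distinct zero positions yield distinct windows: if $c^{(j)} = c^{(j')}$ with $j \ne j'$, then $\mathbf a$ is fixed by the shift by $j' - j$, so it has some period $g$ dividing $\gcd(j'-j,m) < m$; but then each period sums to $ng/m$, forcing $m \mid ng$, and $\gcd(m,n) = 1$ gives $m \mid g$, a contradiction. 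The same observation shows every such necklace has full period $m$, so the number of equivalence classes is $\frac{1}{n+1}\binom{2n}{n} = C_n$, which is the parenthetical assertion. It remains to prove $\sum_{j:\, a_j = 0} A_{c^{(j)}} = n!$.

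By Theorem~\ref{main}, $A_{c^{(j)}}$ counts the $C^{(j)}$-permutations, where I take $C^{(j)}$ to be the division of $\{1,\dots,n\}$ (with its usual order) into consecutive intervals of sizes $a_{j+1},\dots,a_{j+n}$, the empty slot $a_j$ being dropped. Each $C^{(j)}$-permutation is in particular an element of $S_n$, so the identity is equivalent to the statement that the sets of $C^{(j)}$-permutations, as $j$ runs over the zero positions of $\mathbf a$, partition $S_n$; that is, every $w \in S_n$ is a $C^{(j)}$-permutation for exactly one such $j$. I would prove this by induction on $n$, the case $n = 1$ being trivial.

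For the inductive step, fix $w = w_1 w_2 \cdots w_n \in S_n$ and examine its first letter. If $w$ is a $C^{(j)}$-permutation then $w_1$ is admissible with respect to $C^{(j)}$, and, after relabeling $\{1,\dots,n\}\minus\{w_1\}$ order-preservingly onto $\{1,\dots,n-1\}$, the tail $w_2\cdots w_n$ is a permutation for the deleted division $(C^{(j)})^{w_1}$. The key computation, immediate from the three cases defining the deletion $C^s$, is that this deleted division is again of consecutive-interval type: it is the consecutive-interval division of $\{1,\dots,n-1\}$ attached to a zero position of a level-$(n-1)$ necklace $\mathbf a'$ obtained from $\mathbf a$ by a contraction dictated by where $w_1$ sits — either merging two cyclically adjacent slots while absorbing one unit (when $w_1$ is the least element of the first interval or the greatest element of the last), or dissolving one interior slot and splitting its remaining units between the two neighbouring slots in the proportion determined by $w_1$'s position in its interval — and moreover the distinguished zero position of $\mathbf a'$ is still $j$. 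Applying the induction hypothesis to each such $\mathbf a'$ then tells us for which $j$ the tail $w_2\cdots w_n$ can be completed, and we want exactly one $j$ to succeed.

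The heart of the matter, and the step I expect to be the main obstacle, is this last point. The contracted necklace $\mathbf a'$ depends on $j$ — through the location of the fixed integer $w_1$ within the interval structure of $C^{(j)}$ — so the induction hypothesis has to be invoked simultaneously for several different level-$(n-1)$ necklaces, and one must check that, taken together, exactly one zero position $j$ survives. Carrying this out is a bookkeeping argument describing how the admissibility of $w_1$, and the resulting contraction, vary with $j$; the monotonicity built into divisions, preserved under deletion (cf.\ Proposition~\ref{superdiagonal}), ought to be what forces the surviving branch to be unique. An alternative that may streamline this step is to run the same induction on the index functions of Proposition~\ref{injection} rather than on the permutations directly.
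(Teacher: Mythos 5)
There is a genuine gap, and it occurs exactly where you flag ``the heart of the matter.'' Worse, the statement you set out to prove by induction is false as you have formulated it. You take every window $C^{(j)}$ to be the consecutive-interval division of $\{1,\dotsc,n\}$ \emph{with its usual order}, and then claim the sets of $C^{(j)}$-permutations partition $S_n$. They do not. Take $n=3$ and $(c_1,c_2,c_3)=(2,0,1)$, so the necklace is $(2,0,1,0)$ with zero positions $j=2,4$ and windows $(1,0,2)$ and $(2,0,1)$. With the usual order these give $C^{(2)}=(\{1\},\emptyset,\{2,3\})$ and $C^{(4)}=(\{1,2\},\emptyset,\{3\})$; by Example~\ref{exambinom} their permutation sets are $\{321,312,132\}$ and $\{123,132,312\}$, which overlap in $\{132,312\}$ and miss $213$, $231$ entirely. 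The cardinalities still sum to $3!$ because the count depends only on $\abs{C}$, but no first-letter induction can establish a partition that does not exist. The missing ingredient is that each window must carry a \emph{cyclically shifted} order on the ground set: the elements must be ordered starting from the first element of the block immediately after the chosen zero position and proceeding cyclically. In the example above, $C^{(2)}$ should be $(\{3\},\emptyset,\{1,2\})$ with the order $3<1<2$, whose permutation set $\{213,231,321\}$ is exactly the complement of $\{123,132,312\}$.

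Even after that repair, your inductive scheme for uniqueness is only a sketch of an acknowledged obstacle, and the simultaneous bookkeeping over several contracted necklaces is precisely what the paper's argument is designed to avoid. The paper places $1,\dotsc,n$ on a circle with $n+1$ blocks $B_1,\dotsc,B_{n+1}$ (where $B_{n+1}=\emptyset$) and defines a cyclic deletion in which \emph{every} element is always deletable: deleting $s$ removes $s$ and its block, pushing the elements to its left and right into the neighboring blocks. Each $w\in S_n$ then determines, with no case analysis, a unique surviving empty block $r(w)$, and the fiber $\{w : r(w)=r\}$ is visibly the set of $(C_{r+1},\dotsc,C_{r-1})$-permutations for the cyclically reordered ground set. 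Summing $\abs{\{w:r(w)=r\}}$ over the empty positions $r$ gives $n!$ immediately. I recommend you adopt this global cyclic process rather than trying to salvage the induction; if you do persist with the induction, you must first correct the ordering on each $C^{(j)}$ and then actually prove, not just assert, that exactly one branch survives the deletion of $w_1$.
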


We now prove how these properties arise from the combinatorial interpretation of mixed Eulerian numbers given by Theorem~\ref{main}. We also give the following three additional properties.

\begin{thm} \label{ineq}
We have $A_{c_1,\dotsc,c_n} \le 1^{c_1}2^{c_2} \dotsb n^{c_n}$, with equality if and only if $c_1 + \dotsb + c_i \ge i$ for all $i$.
\end{thm}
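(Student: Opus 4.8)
The plan is to deduce this immediately from Theorem~\ref{main} together with Proposition~\ref{injection}, so the argument in this section will be short. Fix a division $C$ with $\abs{C} = (c_1,\dotsc,c_n)$. By Theorem~\ref{main}, $A_{c_1,\dotsc,c_n}$ is the number of $C$-permutations, so it suffices to bound that count and to determine when the bound is tight.

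First I would count the index functions of $C$. An index function is any map $I : S \to \bb N$ sending each $C_i$ into $\{1,\dotsc,i\}$; since the blocks $C_1,\dotsc,C_n$ partition $S$, the choices on distinct blocks are independent, so the number of index functions of $C$ is exactly $\prod_{i=1}^n i^{\abs{C_i}} = 1^{c_1} 2^{c_2} \dotsb n^{c_n}$. Next, by Proposition~\ref{injection}, the map $w \mapsto I^C_w$ is an injection from the set of $C$-permutations into the set of index functions of $C$. Combining these two facts gives
\[
A_{c_1,\dotsc,c_n} = \#\{C\text{-permutations}\} \le \#\{\text{index functions of }C\} = 1^{c_1}2^{c_2}\dotsb n^{c_n},
\]
which is the desired inequality. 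For the equality statement, Proposition~\ref{injection} asserts that $w \mapsto I^C_w$ is a bijection if and only if $C$ is superdiagonal; since $\abs{C_j} = c_j$, the superdiagonality condition $\abs{C_1}+\dotsb+\abs{C_i} \ge i$ for all $i$ is precisely $c_1+\dotsb+c_i \ge i$ for all $i$. Hence the displayed inequality is an equality exactly in that case, proving the theorem.

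There is no real obstacle internal to this section: all the content has been pushed into Theorem~\ref{main} and Proposition~\ref{injection}. If one wanted a proof not relying on Proposition~\ref{injection}, the hard part would be exactly the combinatorial core of that proposition — showing that when $C$ is superdiagonal the recursive deletion process can be steered to realize every prescribed index function, and exhibiting an unrealizable index function when $C$ fails to be superdiagonal.
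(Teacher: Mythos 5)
Your reduction is exactly the paper's: the proof section for this theorem opens with ``It suffices to prove Proposition~\ref{injection},'' and your derivation of both the inequality and the equality criterion from that proposition (via Theorem~\ref{main} and the count $1^{c_1}2^{c_2}\dotsb n^{c_n}$ of index functions of $C$) is correct and is precisely the step the paper leaves implicit. The one caveat is that in the paper's own logical ordering Proposition~\ref{injection} is stated earlier but only proved in this very section, so the entire substance of the paper's proof of Theorem~\ref{ineq} is the proof of that proposition --- the injectivity of $w \mapsto I^C_w$ and the characterization of surjectivity by superdiagonality --- which you correctly flag as the real work but do not carry out. If Proposition~\ref{injection} may be cited as established, your argument is complete; otherwise it supplies only the (easy) reduction.
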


\begin{thm} \label{hybrid}
Let $c_1$, \dots, $c_n$ be nonnegative integers such that $c_1 + \dotsb + c_n = n$, and suppose there exists some $0 \le r \le n$ such that $c_1 + \dotsb + c_i \ge i$ for all $1 \le i \le r$ and $c_n + c_{n-1} + \dotsb + c_{n-i+1} \ge i$ for all $1 \le i \le n-r$. Then
\[
A_{c_1,\dotsc,c_n} = \binom{n}{c_1+\dotsb+c_r} 1^{c_1} 2^{c_2} \dotsb r^{c_r} 1^{c_n} 2^{c_{n-1}} \dotsb (n-r)^{c_{r+1}}.
\]
\end{thm}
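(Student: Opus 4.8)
The plan is to prove this directly from Theorem~\ref{main}, by counting $C$-permutations for a division $C$ with $\abs C = (c_1,\dotsc,c_n)$, using induction on $n$. The first thing to notice is that the hypothesis is rigid: since $c_1 + \dotsb + c_r \ge r$ and $c_{r+1} + \dotsb + c_n \ge n-r$ while $c_1 + \dotsb + c_n = n$, both inequalities are in fact equalities. Hence $C_1 \cup \dotsb \cup C_r$ consists of the $r$ smallest elements of $S$ and $(C_1,\dotsc,C_r)$ is a superdiagonal division of this set, while $(C_{r+1},\dotsc,C_n)$ is a subdiagonal division of the remaining $n-r$ elements. Call such a $C$ an \emph{$r$-split} division. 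When $r = n$ the claim is exactly Theorem~\ref{ineq}, and when $r = 0$ it is Theorem~\ref{ineq} applied after the identity $A_{c_1,\dotsc,c_n} = A_{c_n,\dotsc,c_1}$ of Theorem~\ref{properties}; so I may assume $1 \le r \le n-1$.

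The crux is a structural lemma in the spirit of Proposition~\ref{superdiagonal}: if $C$ is $r$-split and $s$ is admissible with respect to $C$, then $C^s$ is again a split division, with split point $r-1$ if $s$ lies in one of $C_1,\dotsc,C_r$ and split point $r$ if $s$ lies in one of $C_{r+1},\dotsc,C_n$; moreover the deletion respects the splitting, meaning that in the first case $(C^s_1,\dotsc,C^s_{r-1}) = (C_1,\dotsc,C_r)^s$ and $(C^s_r,\dotsc,C^s_{n-1}) = (C_{r+1},\dotsc,C_n)$, with the symmetric statement in the second case. The only thing that could go wrong is a deletion inside $C_r$ pushing elements of $C_r$ across the split into the right part; this cannot happen because the tightness noted above forces $\abs{C_1} + \dotsb + \abs{C_{r-1}} \ge r-1$, hence $\abs{C_r} \le 1$, so $C_r$ is a singleton whenever it is nonempty (and likewise $C_{r+1}$). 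The same observation shows that the admissible elements of $C$ lying in $C_1,\dotsc,C_r$ are exactly the admissible elements of the division $(C_1,\dotsc,C_r)$, and symmetrically on the right.

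Granting the lemma, I would apply the recursion $A_{\abs C} = \sum_{s \text{ admissible}} A_{\abs{C^s}}$ established in the proof of Theorem~\ref{main}, split according to whether $s$ lies on the left or on the right, and use induction. For a left $s$, the inductive hypothesis applied to the $(r-1)$-split division $C^s$ gives $A_{\abs{C^s}} = \binom{n-1}{r-1} \cdot \bigl(1^{\abs{C^s_1}} \dotsb (r-1)^{\abs{C^s_{r-1}}}\bigr) \cdot \bigl(1^{c_n} 2^{c_{n-1}} \dotsb (n-r)^{c_{r+1}}\bigr)$; summing over the left $s$ and invoking the recursion once more, now for the superdiagonal division $(C_1,\dotsc,C_r)$, together with Theorem~\ref{ineq}, collapses $\sum_{s \text{ left}} 1^{\abs{C^s_1}} \dotsb (r-1)^{\abs{C^s_{r-1}}}$ to $1^{c_1} 2^{c_2} \dotsb r^{c_r}$. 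A symmetric computation for the right $s$ yields the same two products with $\binom{n-1}{r-1}$ replaced by $\binom{n-1}{r}$. Adding the two contributions and applying Pascal's identity $\binom{n-1}{r-1} + \binom{n-1}{r} = \binom{n}{r} = \binom{n}{c_1 + \dotsb + c_r}$ gives the formula, with trivial base case $n = 1$. Equivalently, iterating the structural lemma shows that a permutation $w$ of $S$ is a $C$-permutation exactly when its subword on the $r$ smallest elements is a $(C_1,\dotsc,C_r)$-permutation and its subword on the remaining elements is a $(C_{r+1},\dotsc,C_n)$-permutation; the binomial coefficient then counts the $\binom{n}{c_1 + \dotsb + c_r}$ ways to interleave them, which is the natural ``combinatorial interpretation'' form of the theorem.

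I expect the structural lemma — specifically the case analysis showing that the split shape is preserved and that left and right deletions do not interact — to be the only real work; once the singleton property of the boundary sets $C_r$ and $C_{r+1}$ is in hand, the counting is a routine induction with Pascal's rule, paralleling how Theorem~\ref{ineq} (the case $r=n$) and the binomial identity $A_{k,0,\dotsc,0,n-k} = \binom nk$ (the case $r=k$) fall out of the same circle of ideas.
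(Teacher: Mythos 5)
Your proposal is correct and takes essentially the same route as the paper: the paper's proof is exactly your structural lemma (deletions respect the split because the tightness of the hypotheses forces $\abs{C_r},\abs{C_{r+1}} \le 1$), followed by the interleaving count you give in your ``equivalently'' remark, citing Theorem~\ref{properties}\eqref{propeq} and \eqref{propreverse} for the two factors. Your Pascal's-identity induction is just a repackaging of that same count.
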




\begin{thm} \label{partialeulerian}
We have
\[
A_{n-m, 0^{k-3}, r, m-r, 0^{n-k}} = \sum_{i=0}^{n-m} \binom{m+i}{m} A(m,k-i;r)
\]
where $A(n,k;r)$ equals the number of permutations $w \in S_{n+1}$ with $k-1$ descents and $w_1 = r+1$. In particular,
\[
A_{n-m,0^{k-2},m,0^{n-k}} = \sum_{i=0}^{n-m} \binom{m+i}{m} A(m,k-i)
\]
where $A(n,k)$ is defined to be 0 if $k \le 0$ or $k > n$.
\end{thm}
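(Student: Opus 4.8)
The plan is to prove Theorem~\ref{partialeulerian} by directly counting $C$-permutations for the division $C$ with $\abs{C} = (n-m, 0^{k-3}, r, m-r, 0^{n-k})$, using Theorem~\ref{main}. So let $S = \{1,\dots,n\}$ with its usual order and write $C = (C_1, \emptyset, \dots, \emptyset, C_{k-1}, C_k, \emptyset, \dots, \emptyset)$, where $\abs{C_1} = n-m$, $\abs{C_{k-1}} = r$, $\abs{C_k} = m-r$, and all other blocks are empty. The key structural observation is that because blocks $C_2, \dots, C_{k-2}$ and $C_{k+1}, \dots, C_n$ are empty, the only admissible elements at any stage are: the smallest remaining element of the first block, any element of the middle blocks, and the largest remaining element of the last block. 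Moreover, deleting the smallest element of $C_1$ just shrinks $C_1$ (its type stays of the same shape with $\abs{C_1}$ decreased by one and the block $C_2=\emptyset$ absorbed — but here $C_2$ is empty so effectively the index shifts), and similarly for the last block; deleting an element of $C_{k-1}$ or $C_k$ redistributes mass between the two middle blocks and their (empty) neighbors. I would track how the "shape" $(\abs{C_1}, r', m'-r', \text{position})$ evolves.

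The cleaner route is to first handle the case $n-m = 0$, i.e.\ $C_1 = \emptyset$: then $C = (\emptyset^{k-2}, r, m-r, 0^{n-k})$ is exactly the Ehrenborg–Readdy–Steingrímsson situation, which by property~\eqref{propadjacent} (proven combinatorially earlier via Corollary~\ref{eulerian} and the analysis behind Example~\ref{examone}/\ref{exambinom}) gives $A_{0^{k-2},r,m-r,0^{n-k}} = A(m,k;r)$, the number of $w \in S_{m+1}$ with $k-1$ descents and $w_1 = r+1$. (Here one should note $n$ versus $m$: since all nonempty blocks involve only $m$ elements in the "interesting" part once $C_1$ is empty and the trailing zeros are irrelevant, the count depends only on $m$, $k$, $r$.) Then for general $C_1$ of size $n-m$, I would set up a recursion on $\abs{C_1}$. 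By the main recursion displayed in the proof of Theorem~\ref{main},
\[
A_{\abs{C}} = A_{\abs{C^1}} + \sum_{i} \sum_{s \in C_i} A_{\abs{C^s}} + A_{\abs{C^n}},
\]
where the sum over $i$ ranges over middle blocks. Deleting the smallest element of $C_1$ gives a division whose $\abs{C_1}$ has dropped by one (same $r$, same $m-r$), so that term is $A_{n-m-1, 0^{k-3}, r, m-r, 0^{n-k}}$, handled by induction on $n-m$. The terms from deleting elements of the last block $C_n$ — wait, $C_n = \emptyset$ unless $k = n$, so typically that term is absent or handled separately. The terms from deleting elements of the middle blocks $C_{k-1}, C_k$ shift a copy of $\Delta$ around but keep $\abs{C_1}$ fixed; these are governed by the same identity one step down in $(r, m-r)$, and I expect them to telescope into the $A(m, k-i; r)$ pieces, while the repeated contributions of the $A_{n-m-1,\dots}$ term accumulate the binomial coefficient $\binom{m+i}{m}$ via the standard identity $\sum_{j} \binom{m+j}{m} = \binom{m+i+1}{m+1}$.

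Concretely, I would guess-and-verify: assume $A_{n-m,0^{k-3},r,m-r,0^{n-k}} = \sum_{i=0}^{n-m} \binom{m+i}{m} A(m,k-i;r)$ and also that it equals $\sum_{i=0}^{n-m}\binom{m+i}{m} A(m,k-i;r)$; plug into the recursion and check both sides match, using Pascal's rule on the binomial coefficients and the base case $n-m=0$ from property~\eqref{propadjacent}. The main obstacle, I expect, is bookkeeping the recursion correctly: when we delete the smallest element of $C_1$, the empty block $C_2$ gets merged in, so the resulting division has a slightly different shape (the "gap" of zeros shrinks), and one must confirm that this does not change the count — i.e.\ that $A_{n-m,0^{k-3},r,m-r,0^{n-k}}$ genuinely depends only on $n-m$, $k$, $m$, $r$ and not on how the zeros are distributed, which follows because the only admissible moves never involve those interior empty blocks until they are forced to merge, and at that point the relabeled division is literally of the same parametric form with $n-m$ decreased. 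Once that invariance is nailed down, the inductive step is a routine binomial-coefficient manipulation, and the "in particular" statement is the special case $r = 0$ combined with $A(m,k;0) = A(m,k)$ (a single permutation-counting identity: $w_1 = 1$ forces the first descent structure, and one checks $A(m,k;0)$ counts permutations of $\{1,\dots,m+1\}$ with $k-1$ descents starting with $1$, which is in bijection with permutations of $\{1,\dots,m\}$ with $k-1$ descents — giving $A(m,k)$).
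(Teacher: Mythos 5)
Your strategy---run the deletion recursion $A_{\abs{C}} = A_{\abs{C^1}} + \sum_i \sum_{s\in C_i} A_{\abs{C^s}} + A_{\abs{C^n}}$ and verify the closed form by induction---is genuinely different from the paper's proof, which instead gives an explicit non-recursive characterization of these $C$-permutations (Proposition~\ref{generalpartialeulerian}, via ``$C_1$-descents'') and then counts them through a bijection with $\star$-permutations. Your route can be made to work, but as written it has concrete gaps. First, you miscompute the shape after deleting the smallest element of $C_1$: since the empty block $C_2$ is absorbed, $\abs{C^1} = (n-m-1, 0^{k-4}, r, m-r, 0^{n-k})$, so the position parameter drops from $k$ to $k-1$ along with $n$. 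You try to dismiss this by asserting the count ``does not depend on how the zeros are distributed,'' but that assertion is false for mixed Eulerian numbers in general (e.g.\ $A_{0^{k-1},n,0^{n-k}} = A(n,k)$ depends on $k$); what actually makes the induction close is that the index $i$ in $\sum_i \binom{m+i}{m}A(m,k-i;r)$ records exactly how many times $k$ has been decremented by such deletions. Likewise, deleting the $j$-th smallest element of $C_{k-1}$ (resp.\ $C_k$) yields shape parameters $(n-1,\,m-1,\,k-1,\,j-1)$ (resp.\ $(n-1,\,m-1,\,k,\,r+j-1)$), so $m$ and $r$ move as well; in particular the induction must be organized on $n$, not on $n-m$ alone, since the middle-block terms leave $n-m$ unchanged.

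Second, the ``telescoping'' is not a routine binomial manipulation: after applying Pascal's rule to cancel the $A_{\abs{C^1}}$ contribution, the inductive step reduces termwise to the identity
\[
A(m,k;r) \;=\; \sum_{j=0}^{r-1} A(m-1,k-1;j) \;+\; \sum_{j=r}^{m-1} A(m-1,k;j),
\]
which you neither state nor prove. It is true (delete $w_1$ from $w\in S_{m+1}$ and standardize, splitting according to whether position $1$ is a descent), but it is the real engine of the argument and must be supplied. You would also need to handle the boundary cases $k=3$ (where $C_{k-2}=C_1$, so deletions from $C_{k-1}$ merge elements back into the first block and produce a degenerate shape) and $k=n$ (where $C_n\neq\emptyset$ and the last-block term of the recursion appears)---precisely the cases the paper's own induction treats separately. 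With the shape bookkeeping corrected, the displayed identity proved, and these boundary cases checked, your induction does close and yields a valid alternative proof; the base case $n-m=0$ via Theorem~\ref{properties}(f) and the reduction $A(m,k;0)=A(m,k)$ for the ``in particular'' statement are both fine as you describe them.
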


We do not have a combinatorial proof of Theorem 3.1(\ref{propweightedsum}), which was proven using the volume of the permutohedron.

\subsection{Proofs of Theorem \ref{properties}}

Property~\eqref{propintegers} is clear.

Property~\eqref{propreverse} follows from the fact that if $w$ is a $(C_1,\dotsc,C_n)$-permutation, then $w$ is also a $(C_n,\dotsc,C_1)$-permutation with the reverse ordering on $C_1 \cup \dotsb \cup C_n$.

Property~\eqref{propadjacent}, which is a generalization of property~\eqref{propeulerian}, follows from the following proposition.

\begin{prop} \label{generaleulerian}
Let $2 \le k \le n$ and $0 \le r \le n$. Let $C$ be a division of $S$ with $\abs{C} = (0^{k-2},r,n-r,0^{n-k})$. Let $\lambda$ be an element not in $S$ such that $\lambda > s$ for all $s \in C_{k-1}$ and $\lambda < s$ for all $s \in C_k$. Then a permutation $w = w_1 \dotsc w_n$ of $S$ is a $C$-permutation if and only if the sequence $\lambda$, $w_1$, \dots, $w_n$ has $k-1$ descents.
\end{prop}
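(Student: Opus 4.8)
The plan is to prove the biconditional by induction on $n$, peeling off the first letter $w_1$ of $w = w_1 \dotsc w_n$ and using the recursive description of $C$-permutations ($w$ is a $C$-permutation iff $w_1$ is admissible and $w_2 \dotsc w_n$ is a $C^{w_1}$-permutation) together with the inductive hypothesis applied to $C^{w_1}$. The base case is $n = 2$, so $k = 2$ and $\abs{C} = (r, 2-r)$; here one checks $r = 0, 1, 2$ directly against Examples~\ref{examone} and \ref{examall}. For instance, when $r = 1$ both permutations of $S$ are $C$-permutations and both words $\lambda w_1 w_2$ have exactly one descent, while when $r \in \{0, 2\}$ exactly one permutation is a $C$-permutation and exactly one of the two words $\lambda w_1 w_2$ has one descent.

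For the inductive step ($n \ge 3$), note that $C_i = \emptyset$ unless $i \in \{k-1, k\}$, so $w_1 \in C_{k-1}$ or $w_1 \in C_k$. First I would pin down admissibility: an element of $C_{k-1}$ is admissible iff $k \ge 3$ or it equals $\min S$, and an element of $C_k$ is admissible iff $k \le n-1$ or it equals $\max S$. The two directions of the proposition are consistent on the non-admissible corner cases: if $k = 2$ and $w_1 \in C_1 \setminus \{\min S\}$ then $w_1 \dotsc w_n$ has a descent just before $\min S$, so (as $\lambda > w_1$) the word $\lambda w_1 \dotsc w_n$ has at least $2 > k-1$ descents; dually, if $k = n$ and $w_1 \in C_n \setminus \{\max S\}$ then $w_1 \dotsc w_n$ is not strictly decreasing, so (as $\lambda < w_1$) the word $\lambda w_1 \dotsc w_n$ has at most $n-2 < k-1$ descents. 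Hence in both directions it suffices to treat admissible $w_1$.

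Next I would compute $C^{w_1}$ from the deletion rules and exhibit an auxiliary separating element $\lambda'$ so the inductive hypothesis applies. In every case $C^{w_1}$ again has the form $(0^{k'-2}, r', n'-r', 0^{n'-k'})$ with $n' = n-1$ and $2 \le k' \le n'$: if $w_1 \in C_{k-1}$ with $k \ge 3$ then $k' = k-1$, and if $w_1 \in C_k$ with $k \le n-1$ then $k' = k$; in these two generic cases I take $\lambda' := w_1$, which separates the relevant blocks of $C^{w_1}$ because the deletion places to the left of the new boundary exactly the elements of $S$ smaller than $w_1$ that it touches and to the right exactly the larger ones. In the corner case $k = 2$, $w_1 = \min S$, the $i=1$ deletion rule collapses everything into a single nonempty block and I take $\lambda'$ a fresh element larger than all of $S \setminus \{w_1\}$; symmetrically, when $k = n$, $w_1 = \max S$, the $i=n$ rule collapses everything and I take $\lambda'$ a fresh minimum.

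Finally, the inductive hypothesis for $(C^{w_1}, \lambda')$ gives: $w_2 \dotsc w_n$ is a $C^{w_1}$-permutation iff $\lambda' w_2 \dotsc w_n$ has $k'-1$ descents, and it remains to match this with ``$\lambda w_1 \dotsc w_n$ has $k-1$ descents.'' This last step is routine: when $\lambda' = w_1$, the words $\lambda w_1 \dotsc w_n$ and $\lambda' w_2 \dotsc w_n$ share the tail $w_1 \dotsc w_n$, so their descent counts differ by the indicator $[\lambda > w_1]$, which equals $1$ when $w_1 \in C_{k-1}$ (matching $k' = k-1$) and $0$ when $w_1 \in C_k$ (matching $k' = k$); in the two corner cases one instead uses that $w_1$ is extreme in $S$ on one side while $\lambda'$ is extreme on the other, so the two descent counts differ by exactly $1$ in the way relating $k'$ to $k$. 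I expect the only genuine difficulty to be organizational bookkeeping around $k = 2$ and $k = n$, where the endpoint deletion rules differ from the generic one and force the switch from $\lambda' = w_1$ to a fresh extreme element; once the right $\lambda'$ is chosen, the descent-count comparisons are one-line checks.
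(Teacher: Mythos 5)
Your proposal is correct and takes essentially the same route as the paper: induction on $n$, peeling off $w_1$, computing $\abs{C^{w_1}}$, and applying the inductive hypothesis to $C^{w_1}$ with $w_1$ itself serving as the new separating element $\lambda'$ in the generic cases. The only cosmetic differences are that the paper disposes of the $k=2$ and $k=n$ corner cases by direct appeal to Example~\ref{examone} (the unique increasing/decreasing permutation) rather than by re-invoking the inductive hypothesis with a fresh extremal $\lambda'$, and your final parenthetical that the corner-case descent counts ``differ by exactly $1$'' should read ``differ by $k-k'$'' (the difference is $0$ in the $k=2$ corner, where $k'=k$), a harmless slip that your own qualifier already anticipates.
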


\begin{proof}
We induct on $n$. The argument below will work for $n=2$ without assuming the inductive hypothesis, so we will have a base case. Assume without loss of generality that $S = \{1,\dotsc,n\}$. Assume $w = w_1 \dotsc w_n$ is a $C$-permutation. First suppose $w_1 \le r$. If $k > 2$, then $\abs{C^{w_1}} = (0^{k-3}, w_1-1, n-w_1, 0^{n-k})$. Since $w_2 \dotsc w_n$ is a $C^{w_1}$-permutation, the inductive hypothesis then implies that the sequence $w_1$, $w_2$, \dots, $w_n$ has $k-2$ descents. If $k=2$, then since $w_1 \le r$ and $w_1$ is admissible with respect to $C$, we must have $w_1 = 1$ and $\abs{C^{w_1}} = (n-1,0^{n-2})$. Thus $w_2 \dotsc w_n = 2 \dotsc n$ (see Example~\ref{examone}), so $w_1 \dotsc w_n = 1 \dotsc n$. In either case, $w_1$, \dots, $w_n$ has $k-2$ descents. Since $w_1 \le r$, it follows that $\lambda$, $w_1$, \dots, $w_n$ has $k-1$ descents, as desired. The argument for $w_1 > r$ follows analogously, with $k = n$ being the special case instead of $k = 2$.

Conversely, suppose $w = w_1 \dots w_n$ is a permutation of $S$ such that $\lambda$, $w_1$, \dots, $w_n$ has $k-1$ descents. First suppose $w_1 \le r$. Hence $w_1$, $w_2$, \dots, $w_n$ has $k-2$ descents. If $k > 2$, then $w_1$ is admissible with respect to $C$ and $\abs{C^{w_1}} = (0^{k-3}, w_1-1, n-w_1, 0^{n-k})$. The inductive hypothesis then implies that $w_2 \dotsc w_n$ is a $C^{w_1}$-permutation. If $k=2$, then $w_1$, \dots, $w_n$ has no descents, so $w = 1 \dotsc n$. It is easy to see that this is a $C$-permutation. In either case, we have that $w$ is a $C$-permutation. The argument for $w_1 > r$ follows analogously.
\end{proof}

\begin{cor} \label{eulerian}
Let $1 \le k \le n$ and let $C = (\emptyset^{k-1}, \{1,\dotsc,n\}, \emptyset^{n-k})$. Then a permutation $w \in S_n$ is a $C$-permutation if and only if it has $k-1$ descents.
\end{cor}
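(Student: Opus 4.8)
The plan is to deduce this directly from Proposition~\ref{generaleulerian} by specializing $r = 0$, with the degenerate case $k = 1$ handled separately.

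First suppose $k \ge 2$. Apply Proposition~\ref{generaleulerian} with this $k$ and with $r = 0$. Then $\abs{C} = (0^{k-2}, 0, n, 0^{n-k}) = (0^{k-1}, n, 0^{n-k})$, which is exactly the case at hand: $C_{k-1} = \emptyset$ and $C_k = \{1,\dotsc,n\} = S$. The auxiliary element $\lambda$ of Proposition~\ref{generaleulerian} is required to satisfy $\lambda > s$ for all $s \in C_{k-1}$ (vacuous, since $C_{k-1} = \emptyset$) and $\lambda < s$ for all $s \in C_k = S$, so we may simply take $\lambda$ to be a new element smaller than every element of $S$. Proposition~\ref{generaleulerian} then asserts that $w = w_1 \dotsc w_n$ is a $C$-permutation if and only if $\lambda, w_1, \dotsc, w_n$ has $k-1$ descents. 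Since $\lambda < w_1$, the pair $(\lambda, w_1)$ contributes an ascent, so $\lambda, w_1, \dotsc, w_n$ has precisely as many descents as $w_1, \dotsc, w_n$. Hence $w$ is a $C$-permutation if and only if $w$ has $k-1$ descents.

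It remains to treat $k = 1$, which is outside the range of Proposition~\ref{generaleulerian}. Here $C = (\{1,\dotsc,n\}, \emptyset^{n-1})$, and by Example~\ref{examone} the unique $C$-permutation is $12 \dotsc n$. This is also the unique permutation in $S_n$ with $0 = k-1$ descents, so the equivalence holds in this case as well.

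There is no real obstacle: the corollary is essentially the $r = 0$ instance of Proposition~\ref{generaleulerian}. The only two points to be careful about are that $k = 1$ must be dispatched by hand (since Proposition~\ref{generaleulerian} assumes $k \ge 2$), and that one should explicitly note that prepending a global minimum $\lambda$ to a word leaves its descent count unchanged, which is what lets us pass from "$\lambda, w_1, \dotsc, w_n$ has $k-1$ descents" to "$w$ has $k-1$ descents."
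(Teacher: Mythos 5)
Your proof is correct and follows the paper's approach: the paper's own proof is simply ``Take $r=0$ or $n$ in the previous Proposition,'' using $r=n$ (where $\lambda$ is a global maximum, shifting the descent count by one) to cover the case $k=1$ that you instead dispatch by hand via Example~\ref{examone}. Both routes are valid specializations of Proposition~\ref{generaleulerian}, and your explicit remarks about the descent count of $\lambda, w_1, \dotsc, w_n$ and the excluded case $k=1$ are exactly the details the paper leaves implicit.
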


\begin{proof}
Take $r=0$ or $n$ in the previous Proposition.
\end{proof}

We can also consider descents of ``unfinished'' permutations which are admissible with respect to $C$. The proof is similarly by induction; we omit it here.

\begin{prop} \label{descents}
Let $C$ be a division with $\abs{C} = (0^{k-1},n,0^{n-k})$. Suppose that the sequence $s_1s_2 \dotsc s_i$ is admissible with respect to $C$. Let $j$ be the index such that $s_i \in C^{s_1\dotsc s_{i-1}}_j$. Then $s_1s_2 \dotsc s_i$ has $k-j$ descents.
\end{prop}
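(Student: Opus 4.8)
The plan is to prove a common generalization of Proposition~\ref{descents} and Proposition~\ref{generaleulerian} that applies to arbitrary admissible prefixes rather than only to complete $C$-permutations. Precisely, I would show: \emph{if $2 \le k \le n$ and $0 \le r \le n$, if $C$ is a division with $\abs{C} = (0^{k-2}, r, n-r, 0^{n-k})$, and if $\lambda \notin S$ satisfies $\lambda > s$ for all $s \in C_{k-1}$ and $\lambda < s$ for all $s \in C_k$, then for every admissible sequence $s_1 \dotsc s_i$ with respect to $C$ --- writing $j$ for the index of $s_i$ in $C^{s_1 \dotsc s_{i-1}}$ --- the sequence $\lambda, s_1, \dotsc, s_i$ has exactly $k-j$ descents.} Proposition~\ref{descents} is the special case $r = 0$: then $C_k = S$, the marker $\lambda$ lies below every element of $S$, so prepending $\lambda$ contributes no descent and $s_1 \dotsc s_i$ itself has $k - j$ descents. (The remaining case $\abs{C} = (n, 0^{n-1})$ of Proposition~\ref{descents} is immediate from Example~\ref{examone}: every admissible prefix is an increasing initial segment, so $j = 1$ and the prefix has $0 = k - j$ descents.)

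The generalization is proved by induction on $n$, following the scheme of the proof of Proposition~\ref{generaleulerian}; the base case $n = 2$ (where $k = 2$, which is simultaneously both special cases below) is checked directly. Assume $S = \{1, \dotsc, n\}$ and let $s_1 \dotsc s_i$ be admissible. If $i = 1$, then $j = k-1$ or $j = k$ according as $s_1 \in C_{k-1}$ or $s_1 \in C_k$ (these are the only nonempty sets), and correspondingly $\lambda > s_1$ or $\lambda < s_1$, so $\lambda, s_1$ has $1 = k - (k-1)$ or $0 = k - k$ descents, as required. For $i \ge 2$, put $C' = C^{s_1}$; then $s_2 \dotsc s_i$ is admissible with respect to $C'$ and $j$ is the index of $s_i$ in $(C')^{s_2 \dotsc s_{i-1}}$, so the inductive hypothesis will apply to $C'$ once we know its shape. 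I would split into four cases by the location of $s_1$.

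Suppose first that $s_1 \in C_k$ with $k < n$, or $s_1 \in C_{k-1}$ with $k > 2$ (the ``generic'' cases, where $s_1$ lies in a set that is neither first nor last). A direct computation of the deletion shows that $C'$ is again of the form $(0^{\kappa - 2}, r', n' - r', 0^{n' - \kappa})$ on $n' = n - 1$ elements, with $\kappa = k$ in the first case and $\kappa = k - 1$ in the second, and that its marker can be taken to be $\lambda' = s_1 - \tfrac12$, which has the same order relation as $s_1$ with every element of $S \minus \{s_1\}$. By the inductive hypothesis, $\lambda', s_2, \dotsc, s_i$ has $\kappa - j$ descents; since $\lambda'$ and $s_1$ compare the same way with $s_2$ (and the descents within $s_2 \dotsc s_i$ are the same in both sequences), the sequence $\lambda, s_1, s_2, \dotsc, s_i$ has $[\lambda > s_1] + (\kappa - j)$ descents. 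Finally $[\lambda > s_1] = k - \kappa$ --- it is $1$ when $s_1 \in C_{k-1}$ and $0$ when $s_1 \in C_k$ --- which yields $k - j$.

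The remaining ``boundary'' cases are $k = 2$ with $s_1 = \min C_1$ and $k = n$ with $s_1 = \max C_n$; here I would argue directly. In the first case $C' = (\{2, \dotsc, n\}, \emptyset^{n-2})$ and in the second $C' = (\emptyset^{n-2}, \{1, \dotsc, n-1\})$, so by Example~\ref{examone} the prefix $s_2 \dotsc s_i$ is forced to be $2, 3, \dotsc$ (increasing) resp.\ $n-1, n-2, \dotsc$ (decreasing). In the increasing case one has $j = 1$ and the only descent of $\lambda, s_1, \dotsc, s_i$ is $(\lambda, s_1)$ (since $\lambda > 1 = s_1$ and the $s$'s increase), so the descent count is $1 = k - j$. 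In the decreasing case one has $j = n - i + 1$, and the descents of $\lambda, s_1, \dotsc, s_i$ are exactly $(s_1, s_2), \dotsc, (s_{i-1}, s_i)$ (since $\lambda < s_1$ and the $s$'s decrease), which number $i - 1 = k - j$. I expect these boundary cases to be the main obstacle: there the heuristic ``$\lambda'$ sits where $s_1$ was'' fails --- the new marker lands at the opposite extreme from $s_1$ --- so the clean bookkeeping of the generic cases does not apply, exactly as the proof of Proposition~\ref{generaleulerian} must treat $k = 2$ and $k = n$ separately. Everything else is routine once the marker $\lambda$ is incorporated into the statement.
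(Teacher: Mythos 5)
Your proof is correct, and it supplies exactly the induction that the paper alludes to but omits (``the proof is similarly by induction''). The essential point --- that the statement must be strengthened to divisions of shape $(0^{k-2},r,n-r,0^{n-k})$ with the marker $\lambda$, since the shape $(0^{k-1},n,0^{n-k})$ is not preserved under deletion --- is identified and handled correctly, as are the boundary cases $k=2$ and $k=n$ and the specializations ($r=0$, and $k=1$ via Example~\ref{examone}) back to Proposition~\ref{descents}.
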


Property~\eqref{propsum} follows from Theorem~\ref{cycle}, which is proven in section~\ref{proofcycle}.

Property~\eqref{propall} follows from Example~\ref{examall}.

Property~\eqref{propbinom} follows from Example~\ref{exambinom}.

Property~\eqref{propeq} is implied by Theorem~\ref{ineq}, which we prove in the next section.

\subsection{Proof of Theorem~\ref{ineq}} \label{proofineq}

It suffices to prove Proposition~\ref{injection}.

We first prove injectivity. Let $w = w_1 \dotsc w_n$ be a $C$-permutation, and set $I = I_w^C$. We wish to show that $w$ is determined by $I$. It suffices to show that $w_1$ is determined by $I$. Indeed, if we prove this, then since $w_2 \dotsc w_n$ is a $C^{w_1}$-permutation, the same argument would imply that $w_2$ is determined by $I_{w_2 \dotsc w_n}^{C^{w_1}}$, and this function is determined as the restriction of $I$ to $S \minus \{w_1\}$. The terms $w_3$, $w_4$, are determined similarly.

Let $i_1$ be such that $w_1 \in C_{i_1}$. Then $I(w_1) = i_1$. Let $i$ be the largest number such that there exists some $s \in C_{i}$ with $I(s) = i$, and consider the smallest such $s$. By definition, $i_1 \le i$. If $i_1 < i$, then after we delete $w_1$ from $C$ we have $s \in C_{i-1}^{w_1}$. Hence $I(s) \le i-1$, contradicting the definition of $s$. So $i_1 = i$. Now if $w_1 > s$, then after we delete $w_1$ from $C$ we obtain $s \in C_{i-1}^{w_1}$, again a contradiction. Hence $w_1 = s$. Thus $w_1$ is determined by $I$, as desired.

We now prove surjectivity in the case where $C$ is superdiagonal. We induct on $n$. The case $n=1$ is trivial. Suppose $C$ is superdiagonal. Let $I$ be an index function for $C$. We wish to construct a $C$-permutation $w$ such that $I_w = I$. First note that $\abs{C_1} \ge 1$, and any element $s \in C_1$ satisfies $I(s) = 1$. Thus we can let $i$ be the largest number such that there exists some $s \in C_{i}$ with $I(s) = i$, and we consider the smallest such $s$. Since $\abs{C_n} \le 1$, it follows that $s$ is admissible with respect to $C$. By Proposition~\ref{superdiagonal}, $C^{s}$ is superdiagonal.

Let $I' : S \minus \{s\} \to \bb N$ be the restriction of $I$ to $S \minus \{s\}$. We claim that $I'$ is an index function of $C^{s}$. Indeed, let $s' \in S \minus \{s\}$ and let $i'$ be such that $s' \in C_{i'}^s$. We wish to prove $I'(s') \in \{1,\dotsc,i'\}$. We have either $s' \in C_{i'}$ or $s' \in C_{i'+1}$. In the first case, we are done since $I'(s') = I(s') \in \{1,\dotsc,i'\}$. In the second case, we must have either $i'+1 > i$ or $i'+1 = i$ and $s' < s$. By the definition of $i$ and $s$, we must therefore have $I(s') \neq i'+1$, and hence $I'(s') = I(s') \in \{1,\dotsc,i'\}$, as desired. Thus $I'$ is an index function for $C^{s}$.

Since $C^{s}$ is superdiagonal and $I'$ is an index function for $C^{s}$, by the inductive hypothesis there exists a $C^{s}$-permutation $w'$ such that $I_{w'}^{C^{s}} = I'$. Letting $w = sw'$, we have that $I_w^C = I$, as desired. This proves surjectivity.

Conversely, suppose $C$ is not superdiagonal. The function $I : S \to \bb N$ with $I(s) = 1$ for all $s \in S$ is clearly an index function of $C$. Suppose there exists a $C$-permutation $w  = s_1 \dotsc, s_n$ with $I_w = I$. Thus when we successively delete $s_1$, \dots, $s_n$ from $C$, we only ever delete from the first set in the current sequence. Hence we only ever delete the smallest remaining element.

Let $i$ be the largest number such that $\abs{C_1} + \dotsb + \abs{C_i} < i$. Hence $i < n$ and $C_{i+1}$ is nonempty. Let $s$ be the smallest element of $C_{i+1}$. After deleting $s_1$, \dots, $s_{\abs{C_1} + \dotsb + \abs{C_i}}$ from $C$, the smallest remaining element is $s$. But $\abs{C_1} + \dotsb + \abs{C_i} < i$, so after the above deletions, $s$ is not in the first set of the sequence. This contradicts $I(s) = 1$. So there is no $w$ such that $I_w = I$, as desired. This proves Proposition~\ref{injection}.

\subsection{Proof of Theorem~\ref{cycle}} \label{proofcycle}

Let $n$ be a positive integer and let $C = (C_1,\dotsc,C_n)$ be a division of $\{1,\dotsc,n\}$ with $\abs{C} = (c_1,\dotsc,c_n)$. Set $C_{n+1} = \emptyset$.

We will describe a process which is a cyclic version of the construction of $C$-permutations. Arrange the numbers 1, \dots, $n$ around a circle $\mc C$ clockwise in that order. We will define $n+1$ ``blocks'' as follows: for each $1 \le i \le n+1$, block $B_i$ initially contains the elements of $C_i$. We view $B_1$, \dots, $B_{n+1}$ as being arranged around $\mc C$ in that order, including the empty blocks; i.e.\ $B_i$ is viewed as being between $B_{i-1}$ and $B_{i+1}$ even if $B_i$ is empty. For any element $s \in \{1,\dotsc,n\}$, we define the deletion of $s$ from $\mc C$ as follows. Suppose $s \in B_i$. Let $B_i^-$ be the set of elements in $B_i$ which are to the left of (counterclockwise from) $s$, and let $B_i^+$ be the set of elements in $B_i$ which are to the right of (clockwise from) $s$. To delete $s$, we remove $s$ \emph{and} the block $B_i$ from $\mc C$, put all the elements of $B_i^-$ into the block to the left of $B_i$, and put all the elements of $B_i^+$ into the block to the right of $B_i$. The order of the undeleted elements remains unchanged. We can then delete another element, and so on. After we delete all $n$ elements, we are left with only one block, which is empty. Since a nonempty block remains nonempty until it is deleted, this final empty block was originally empty and remained so throughout the process.

Let $w = w_1 \dotsc w_n \in S_n$ be a permutation. Let $r(w)$ be the $r$ such that $B_r$ is the final block that remains when we successively delete $w_1$, \dots, $w_n$ from $\mc C$. It is not hard to see that for each $r$ with $C_r = \emptyset$, the set of $w$ such that $r(w) = r$ is precisely the set of $(C_{r+1}, C_{r+2}, \dotsc, C_{r-1})$-permutations, where the indices of the $C_i$ are taken modulo $n+1$ and the elements $\{1,\dotsc,n\}$ are ordered starting from the first element of $C_{r+1}$ and going cyclically to the last element of $C_{r-1}$. There are $A_{c_{r+1},\dotsc,c_{r-1}}$ such permutations. Hence we have
\[
n! = \sum_{c_r=0} A_{c_{r+1},c_{r+2},\dotsc,c_{r-1}}
\]
which is exactly what we wanted to prove.

\subsection{Proof of Theorem~\ref{hybrid}}

Note that the hypotheses on $c_1$, \dots, $c_n$  imply that $c_1 + \dotsb + c_r = r$ and $c_{r+1} + \dotsb + c_n = n-r$. Let $C = (C_1,\dotsc,C_n)$ be a division with $\abs{C} = (c_1,\dotsc,c_n)$. Let $S^- = C_1 \cup \dotsb \cup C_r$ and $S^+ = C_{r+1} \cup \dotsb \cup C_n$. Let $C^- = (C_1,\dotsc,C_r)$ and $C^+ = (C_{r+1},\dotsc,C_n)$. Hence $C^-$ and $C^+$ are divisions of $S^-$ and $S^+$, respectively, and $C^-$ is superdiagonal and $C^+$ is subdiagonal. We write $C = (C^-,C^+)$ to indicate that $C$ is the concatenation of the sequences $C^-$, $C^+$.

Suppose $s \in S^-$ is admissible with respect to $C$. We claim that $s$ is admissible respect to $C^-$ and $C^s = ((C^-)^s, C^+)$. Indeed, this is clearly true if $s \in C_i$ for $i < r$, and it is true if $s \in C_r$ because $\abs{C_r} \le 1$. Similarly, if $s \in S^+$ is admissible with respect to $C$, then $s$ is admissible with respect to $C^+$ and $C^s = (C^-,(C^+)^s)$. Moreover, by Proposition~\ref{superdiagonal}, $C^-$ and $C^+$ remain superdiagonal and subdiagonal, respectively, after deleting elements. Hence, successively deleting elements from $C$ is equivalent to successively deleting elements from $C^-$ and $C^+$. We can thus bijectively construct any $C$-permutation $s_1 \dotsc s_n$ by specifying a $C^-$-permutation, specifying a $C^+$-permutation, and specifying the values of $i$ for which $s_i$ is an element of $S^-$. There are $\binom{n}{c_1 + \dotsb + c_r}$ ways to specify the values of $i$, and by Theorems~\ref{properties}\eqref{propeq} and \eqref{propreverse}, there are $1^{c_1} 2^{c_2} \dotsb r^{c_r}$ $C^-$-permutations and $1^{c_n} 2^{c_n-1} \dotsb (n-r)^{c_{r+1}}$ $C^+$-permutations. This gives the desired result.

\subsection{Proof of Theorem~\ref{partialeulerian}}

We will in fact prove a more general identity. Fix a division $C$ of $S$ such that
\[
\abs{C} = (n-m,0^{k-3},r,m-r,0^{n-k})
\]
for some $0 \le r \le m \le n$ and $3 \le k \le n$. Suppose $s_1$, $s_2$, \dots\ is a sequence of elements, not necessarily all in $S$. We call a term $s_i$ of this sequence a \emph{$C_1$-descent} if either
\begin{itemize}
\item $s_i \in C_1$, or
\item there exists $j > i$ such that $s_j \notin C_1$, $s_i > s_j$, and $s_k \in C_1$ for every $i < k < j$.
\end{itemize}
Note that if $C_1$ is empty, then a $C_1$-descent is just an ordinary descent.

We can now state the result.

\begin{prop} \label{generalpartialeulerian}
Let $C$ be as above, and let $w_0 = \lambda$ be an element not in $S$ such that $\lambda > s$ for all $s \in C_{k-1}$ and $\lambda < s$ for all $s \in C_k$. Then a permutation $w = w_1 \dotsc w_n$ of $S$ is a $C$-permutation if and only if the sequence $w_0$, $w_1$, $w_2$, \dots, $w_n$ satisfies the following properties:
\begin{enumerate}
\renewcommand{\theenumi}{\alph{enumi}}
\renewcommand{\labelenumi}{(\theenumi)}
\item If $i < j$ and $w_i$, $w_j \in C_1$, then $w_i < w_j$. \label{goodascending}
\item The sequence has at least $k-1$ $C_1$-descents. \label{goodatleast}
\item If $w_{i}$ is the $(k-1)$-th $C_1$-descent, then $w_{i+1}$, $w_{i+2}$, \dots, $w_n$ is an increasing sequence. \label{goodend}
\end{enumerate}
\end{prop}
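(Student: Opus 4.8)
The plan is to mirror the proof of Proposition~\ref{generaleulerian}, inducting on $n$ and peeling off $w_1$. The key structural observation is that deleting an admissible element $s$ from a division $C$ with $\abs{C} = (n-m, 0^{k-3}, r, m-r, 0^{n-k})$ produces a division $C^s$ whose shape is again of the same form (with $n$, $m$, and possibly $r$, $k$ decreased appropriately), \emph{except} when we delete the smallest element of $C_1$: in that case the size of $C_1$ drops by one but the other parameters are unchanged, and the sequence of blocks still starts with a (now smaller) $C_1$ block. I would therefore set up the induction so that the base case is when $C_1$ has been exhausted, at which point Proposition~\ref{generaleulerian} (or rather the observation in the remark that a $C_1$-descent becomes an ordinary descent when $C_1 = \emptyset$) applies directly. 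The admissibility bookkeeping here is exactly analogous to the $k=2$ versus $k>2$ case split in Proposition~\ref{generaleulerian}, but now with the additional ``small element of $C_1$'' branch playing the role of the forced move.

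For the forward direction, I would suppose $w = w_1 \dotsc w_n$ is a $C$-permutation and split on where $w_1$ lies. If $w_1$ is the smallest element of $C_1$, then $\abs{C^{w_1}} = (n-m-1, 0^{k-3}, r, m-r, 0^{n-k})$ (same $m$, $r$, $k$ but with $S$ and $n$ reduced by one), and by induction $w_0 w_2 \dotsc w_n$ satisfies (\ref{goodascending})--(\ref{goodend}) relative to $C^{w_1}$; I then need to check that prepending $w_1 \in C_1$ preserves these properties. Property (\ref{goodascending}) holds because $w_1$ is the \emph{smallest} element of $C_1$, so it precedes and is below every later element of $C_1$. For (\ref{goodatleast}) and (\ref{goodend}), $w_1 \in C_1$ is itself a $C_1$-descent by definition, so the count of $C_1$-descents goes up by exactly one while the position of the last one shifts by one slot, keeping the tail increasing. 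If instead $w_1 \notin C_1$ (so $w_1 \in C_{k-1}$ or $C_k$, since those are the only other nonempty blocks, or $w_1$ is the largest element of $C_n$ when $k = n$), then deletion merges blocks as in Corollary~\ref{corcross}/the definition, the shape becomes $(n-m', 0^{k'-3}, r', m'-r', 0^{\cdot})$ with $k$ or $r$ adjusted, and I compare $C_1$-descent counts much as descent counts are compared in Proposition~\ref{generaleulerian}; the key point is that a $w_i \in C_1$ in the new sequence is a $C_1$-descent relative to $w_0 w_1 \dotsc w_n$ if and only if the corresponding structure holds, and the ``jump over $C_1$'' clause in the definition of $C_1$-descent is precisely engineered so that inserting a non-$C_1$ element in front interacts correctly with runs of $C_1$-elements.

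For the converse I would run the same case analysis in reverse: given $w$ with $w_0 w_1 \dotsc w_n$ satisfying (\ref{goodascending})--(\ref{goodend}), argue that $w_1$ is admissible with respect to $C$ (using (\ref{goodascending}) to force $w_1$ to be the smallest element of $C_1$ when $w_1 \in C_1$, and using (\ref{goodatleast})--(\ref{goodend}) plus the position of $\lambda$ relative to $C_{k-1}$, $C_k$ to place $w_1$ correctly when $w_1 \notin C_1$), then check that the restricted sequence satisfies the corresponding properties for $C^{w_1}$ and invoke induction. I expect the main obstacle to be the careful verification that the ad hoc notion of $C_1$-descent behaves correctly under the block-merging deletion when $w_1 \notin C_1$ but $C_1 \neq \emptyset$ — specifically, tracking how a maximal run of $C_1$-elements straddling the deleted position contributes $C_1$-descents before and after, and confirming that the ``$(k-1)$-th $C_1$-descent'' in property (\ref{goodend}) lands in the right place after the parameter $k$ changes. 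Once Proposition~\ref{generalpartialeulerian} is established, Theorem~\ref{partialeulerian} follows by counting: choose which $n-m$ positions among $w_1, \dotsc, w_n$ receive the elements of $C_1$ (this is where the $\binom{m+i}{m}$ comes from, summing over $i$ = number of $C_1$-elements appearing before the $(k-1)$-th $C_1$-descent, which shifts the effective descent target), with the remaining positions filled by a permutation counted by $A(m, k-i; r)$ via Proposition~\ref{generaleulerian} / property~\eqref{propadjacent}.
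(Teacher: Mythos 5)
Your overall strategy --- induct on $n$, peel off $w_1$, and case-split on which block contains $w_1$ --- is exactly the paper's, and your analysis of the $w_1 \in C_1$ branch is essentially correct: prepending the smallest element of $C_1$ contributes exactly one new $C_1$-descent and preserves (\ref{goodascending}) and (\ref{goodend}). One bookkeeping slip there: deleting the smallest element of $C_1$ gives $\abs{C^{w_1}} = (n-m-1, 0^{k-4}, r, m-r, 0^{n-k})$, not $(n-m-1, 0^{k-3}, \dotsc)$ --- the empty block $C_2$ is absorbed, so the parameter $k$ drops to $k-1$ and the inductive hypothesis yields $(k-1)$-goodness for $\lambda, w_2, \dotsc, w_n$, which your ``adds one descent'' observation then upgrades to $(k)$-goodness. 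Also note that in the $w_1 \notin C_1$ branches the correct marker element for $C^{w_1}$ is $w_1$ itself, not $\lambda$ (e.g.\ for $w_1 \in C_{k-1}$ one needs a marker above $C_{k-1}^-$ and below $C_{k-1}^+ \cup C_k$).

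The genuine gap is that you have the exceptional cases backwards, and consequently your plan stalls exactly where the work is. Deleting from $C_1$ is the benign branch: the shape stays in the family and $C_1$ merely shrinks, so a base case of ``$C_1$ exhausted'' never confronts the real difficulty. The branches that fall outside the inductive setup are (i) $k = 3$ with $w_1 \in C_{k-1} = C_2$, where $C^{w_1}$ has first block $C_1 \cup C_2^-$ --- the deletion \emph{injects new elements into the first block}, so the inductive hypothesis would be a statement about $(C_1 \cup C_2^-)$-descents rather than $C_1$-descents and the shape no longer satisfies $k' \ge 3$ in a useful way; and (ii) $k = n$ with $w_1 \in C_n$, where $C^{w_1}$ degenerates to the form $(C_1', \emptyset, \dotsc, \emptyset, C_{n-1}')$. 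The paper does not recurse on the proposition in these cases: it invokes Proposition~\ref{generaleulerian} (ordinary descents, with $k'=2$) for (i) and Example~\ref{exambinom} for (ii), and then translates those explicit descriptions back into $(k,C_1)$-goodness by direct inspection; these sub-cases also supply the true base case $n=3$. Your proposal flags the ``block-merging deletion when $w_1 \notin C_1$'' as the expected obstacle but offers no mechanism for it, and since $C_1$ grows rather than shrinks along branch (i), no induction keyed to $\abs{C_1}$ will reach it. Supplying these two degenerate sub-cases is the actual content of the proof.
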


Note that if $C_1 = \emptyset$, this proposition becomes Proposition~\ref{generaleulerian}.

\begin{proof}[Proof of Proposition~\ref{generalpartialeulerian}]
The proof is similar to that of Proposition~\ref{generaleulerian}. We induct on $n$. The below argument will work for $n=3$ without the inductive hypothesis, so we will have a base case. Call a sequence \emph{$(t,T)$-good} if it satisfies properties (a)--(c) with $k$ replaced with $t$ and $C_1$ replaced with $T$. Without loss of generality, assume $C_1 = \{1', 2', \dotsc, (n-m)'\}$ and $C_{k-1} \cup C_k = \{1,2,\dotsc,m\}$, with the obvious ordering on these two sets.

Suppose $w$ is a $C$-permutation. First suppose $w_1 \in C_1$.  Then $w_1 = 1'$. If $k > 3$, then $\abs{C^{w_1}} = (n-m-1, 0^{k-4}, r, m-r,0^{n-k})$. The inductive hypothesis then implies that the sequence $\lambda$, $w_2$, \dots, $w_n$ is $(k-1,C_1 \minus \{1'\})$-good. It then follows that that $\lambda$, $1'$, $w_2$, \dots, $w_n$ is $(k,C_1)$-good, as desired. If $k = 3$, then $\abs{C^{w_1}} = (n-m+r-1, m-r, 0^{n-3})$. By Proposition~\ref{generaleulerian}, it follows that $\lambda$, $w_2$, \dots, $w_n$ has 1 descent in the ordinary sense. It is easy to check that this implies $\lambda$, $1'$, $w_2$, \dots, $w_n$ is $(3,C_1)$-good.

Now suppose $w_1 \in C_{k-1}$. If $k > 3$, then $\abs{C^{w_1}} = (n-m, 0^{k-4}, w_1-1, m-w_1, 0^{n-k})$. By the inductive hypothesis, the sequence $w_1$, $w_2$, \dots, $w_n$ is $(k-1,C_1)$-good. Since $\lambda > w_1$, it follows that $\lambda$, $w_1$, \dots, $w_n$ is $(k,C_1)$-good, as desired. If $k = 3$, then $\abs{C^{w_1}} = (n-m+w_1-1, m-w_1, 0^{n-k})$. Proposition~\ref{generaleulerian} then implies that $w_1$, $w_2$, \dots, $w_n$ has 1 descent in the ordinary sense. It is easy to check that this implies $\lambda$, $w_1$, \dots, $w_n$ is $(3,C_1)$-good.

Finally, suppose $w_1 \in C_k$. If $k < n$, the argument works similarly as in the previous paragraph. Suppose $k = n$. Then $w_1 = m$ and $\abs{C^{w_1}} = (n-m, 0^{n-3}, m-1)$. By Example~\ref{exambinom}, this implies that in the sequence $w_2$, \dots, $w_n$, the elements of $\{1',\dots,(n-m)'\}$ appear in ascending order and the elements $\{1,\dots,m-1\}$ appear in descending order. Since $w_1 = m$, the same can be said of the sequence $w_1$, \dots, $w_n$. This implies that every term except the last term of this sequence is a $C_1$-descent. Thus, the sequence is $(n,C_1)$-good. Since $\lambda < w_1$, the sequence $\lambda$, $w_1$, \dots, $w_n$ is also $(n,C_1)$-good, as desired.

Conversely, suppose $w$ is a permutation of $S$ such that $\lambda$, $w_1$, \dots, $w_n$ is $(k,C_1)$-good. First suppose $w_1 \in C_1$. By \eqref{goodascending}, we must have $w_1 = 1'$. Hence $\lambda$, $w_2$, \dots, $w_n$ is $(k-1,C_1\minus\{1'\})$-good and $w_1$ is $C$-admissible. If $k > 3$, then $\abs{C^{w_1}} = (n-m-1, 0^{k-4}, r, m-r,0^{n-k})$, so by the inductive hypothesis, $w_2 \dotsc w_n$ is a $C^{w_1}$-permutation. If $k=3$, then $\abs{C^{w_1}} = (n-m+r-1, m-r, 0^{n-3})$. Since $\lambda$, $w_2$, \dots, $w_n$ is $(2,C_1\minus\{1'\})$-good, it has exactly 1 descent in the ordinary sense. So by Proposition~\ref{generaleulerian}, $w_2 \dotsc w_n$ is a $C^{w_1}$-permutation. Either way, $w$ is a $C$-permutation, as desired.

Now suppose $w_1 \in C_{k-1}$. Then the sequence $w_1$, \dots, $w_n$ is $(k-1,C_1)$-good, and $w_1$ is $C$-admissible. If $k > 3$, then $\abs{C^{w_1}} = (n-m, 0^{k-4}, w_1-1, m-w_1, 0^{n-k})$. The inductive hypothesis then implies $w_2 \dotsc w_n$ is a $C^{w_1}$-permutation. If $k=3$, then $w_1$, \dots, $w_n$ has exactly one descent in the ordinary sense, and $\abs{C^{w_1}} = (n-m+w_1-1, m-w_1, 0^{n-k})$. Proposition~\ref{generaleulerian} then implies that $w_2\dotsc w_n$ is a $C^{w_1}$-permutation. Either way, $w$ is a $C$-permutation.

Finally, suppose $w_1 \in C_k$. If $k < n$, the argument works similarly as in the previous paragraph. Suppose $k = n$. Then the sequence $\lambda$, $w_1$, \dots, $w_n$ has $n-1$ $C_1$-descents. But $\lambda < w_1$, so in this sequence the terms $w_1$, $w_2$, \dots, $w_{n-1}$ must all be $C_1$-descents. This implies that the elements of $\{1,\dotsc,m\}$ appear in this sequence in descending order. By (a), the elements of $\{1',\dotsc,(n-m)'\}$ appear in ascending order. It is easy to check that this implies $w$ is a $C$-permutation, as desired. 
\end{proof}

We now want a way to enumerate the permutations from Proposition~\ref{generalpartialeulerian}.
Given a set $S$, define a \emph{$\star$-permutation} of $S$ to be a finite sequence $s_1s_2\dotsc$ consisting of elements of $S$ and ``$\star$'' symbols such that every element of $S$ appears exactly once. A \emph{$\star$-descent} of a $\star$-permutation $s_1s_2\dotsc$ is an index $i$ such that either $s_i = \star$ or there exists some $j > i$ with $s_i$, $s_j \in S$, $s_i > s_j$, and $s_k = \star$ for every $i < k < j$.

\begin{prop}
Let $C$ be a division of $S$ with $\abs{C} = (n-m, 0^{k-3}, r, m-r, 0^{n-k})$, and let $\lambda$ be a number such that $\lambda > s$ for all $s \in C_{k-1}$ and $\lambda < s$ for all $s \in C_k$. Then the $C$-permutations are in bijection with $\star$-permutations $s_1s_2\dotsc$ of $C_{k-1} \cup \{\lambda\} \cup C_k$ for which 
\begin{itemize}
\item $s_1 = \lambda$ 
\item The number of $\star$'s is at most $n-m$.
\item The number of $\star$-descents is equal to $k-1$.
\end{itemize}
\end{prop}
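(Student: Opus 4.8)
The plan is to combine Proposition~\ref{generalpartialeulerian} with an explicit collapsing operation on the runs of $C_1$-elements. By Proposition~\ref{generalpartialeulerian}, the $C$-permutations are in bijection with the permutations $w = w_1\dotsc w_n$ of $S$ for which the sequence $\lambda, w_1, \dotsc, w_n$ satisfies properties (a)--(c); call such a sequence \emph{good}. It therefore suffices to set up a bijection between good sequences and the $\star$-permutations described in the statement. First I would record a normal form for good sequences. Write $u_0 = \lambda, u_1, \dotsc, u_m$ for the subsequence of terms not in $C_1$; these are precisely the $m+1$ elements of $C_{k-1}\cup\{\lambda\}\cup C_k$. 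A good sequence consists of $u_0, u_1, \dotsc, u_m$ in this order with (possibly empty) runs of $C_1$-elements inserted between and after them, where by (a) the concatenation of these runs is $1', 2', \dotsc, (n-m)'$ in increasing order. Using that every $C_1$-element is itself a $C_1$-descent and that $C_1$-elements are the smallest elements of $S$, I would show: (i) the total number of $C_1$-descents is $(n-m) + \des(u_0\dotsc u_m)$; (ii) every $u_t$ with $u_t > u_{t+1}$ has position at most that of the $(k-1)$-st $C_1$-descent, call this position $p$; and (iii) the part of the sequence strictly after $p$ is an increasing run of $C_1$-elements followed by an increasing run $u_j, u_{j+1}, \dotsc, u_m$ of non-$C_1$-elements.

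The forward map sends a good sequence to the $\star$-permutation $\sigma$ obtained by replacing each $C_1$-element at position $\le p$ by a $\star$ and deleting each $C_1$-element at position $> p$. Its non-$\star$ terms are $u_0, \dotsc, u_m$ in order and its first term is $u_0 = \lambda$, so $\sigma$ is a $\star$-permutation of $C_{k-1}\cup\{\lambda\}\cup C_k$ with $s_1 = \lambda$. By (i)--(iii) the number of deleted $C_1$-elements is $(n-m)+\des(u_0\dotsc u_m) - (k-1)$, hence $\sigma$ has $(k-1) - \des(u_0\dotsc u_m)$ stars, which is $\le n-m$ precisely because property (b) says $(n-m)+\des(u_0\dotsc u_m)\ge k-1$. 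Every $\star$-descent of $\sigma$ comes either from a $\star$ or from a $u_t$ with $u_t > u_{t+1}$, and by (i)--(iii) these correspond bijectively to the positions $\le p$ that are $C_1$-descents of the original sequence, of which there are exactly $k-1$. The inverse map takes a $\star$-permutation $\sigma$ with the three listed properties, locates the $(k-1)$-st $\star$-descent (everything after it is an increasing sequence of non-$\star$ terms, since $\sigma$ has no further $\star$-descents), inserts immediately after it a block of $n-m-\#\star(\sigma)$ fresh $C_1$-elements, and then replaces all $\star$'s by $C_1$-elements, assigning $1', 2', \dotsc, (n-m)'$ in left-to-right order. I would verify the result is good and that the two maps are mutually inverse.

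The step I expect to be the main obstacle is the bookkeeping at position $p$, where two cases arise: $p$ may be occupied by some $u_t$ (then $u_t > u_{t+1}$ and the deleted tail run is exactly the $C_1$-run immediately following $u_t$) or by a $C_1$-element (then its run is split, the initial segment becoming $\star$'s and the remainder being deleted). Checking that the inverse map is well defined, that it lands among good sequences — in particular that the re-inserted block of smallest elements sits at the head of the increasing tail so that (c) holds — and that the two maps undo each other all require handling these two cases, together with the degenerate situations $m = 0$, $\des(u_0\dotsc u_m) = k-1$ (no $\star$'s at all), and an empty inserted block, which should be routine.
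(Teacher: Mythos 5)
Your bijection is the same one the paper uses: identify $C$-permutations with the sequences of Proposition~\ref{generalpartialeulerian}, cut at the $(k-1)$-th descent, and exchange $C_1$-elements with $\star$'s (deleting/reinserting those past the cut, which reappear as the head of the ascending tail). The proposal is correct and essentially matches the paper's proof, with the counting of $\star$'s and $\star$-descents spelled out in more detail than the paper bothers to.
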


\begin{proof}
Suppose $s = s_1s_2\dotsc$ is a $\star$-permutation of $C_{k-1} \cup \{\lambda\} \cup C_k$ satisfying the above conditions. Let $i$ be the $(k-1)$-th $\star$-descent of $s$. We obtain a $C$-permutation from $s$ as follows: Begin with the subsequence $s_2\dotsc s_i$, and replace the first $\star$ with the first element of $C_1$, the second $\star$ with the second element of $C_1$, and so on, until all $\star$'s are replaced. Call the new sequence $w' = w_1\dotsc w_{i-1}$. Append to the end of $w'$ the elements of $S \minus \{w_1,\dotsc,w_{i-1}\}$ in ascending order. The result is a $C$-permutation by Proposition~\ref{generalpartialeulerian}.

Now suppose $w = w_1\dotsc w_n$ is a $C$-permutation. Append $\lambda$ to the beginning of this permutation, and replace all $w_i$ for which $w_i \in C_1$ with $\star$'s. Call the resulting $\star$-permutation $s'$. Now, delete any $\star$'s in $s'$ which occur after the $(k-1)$-th $\star$-descent of $s'$. The result is a $\star$-permutation of $C_{k-1} \cup \{\lambda\} \cup C_k$ satisfying the desired conditions.
\end{proof}

\begin{cor}
We have
\[
A_{n-m, 0^{k-3}, r, m-r, 0^{n-k}} = \sum_{i=0}^{n-m} \binom{m+i}{m} A(m,k-i;r)
\]
where $A(n,k;r)$ equals the number of permutations $w \in S_{n+1}$ with $k-1$ descents and $w_1 = r+1$. In particular,
\[
A_{n-m,0^{k-2},m,0^{n-k}} = \sum_{i=0}^{n-m} \binom{m+i}{m} A(m,k-i)
\]
where $A(n,k)$ is defined to be 0 if $k \le 0$ or $k > n$.
\end{cor}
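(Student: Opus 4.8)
The plan is to invoke the preceding Proposition, which already does the bijective heavy lifting, and then simply enumerate the $\star$-permutations it produces. Fix a division $C$ with $\abs{C}=(n-m,0^{k-3},r,m-r,0^{n-k})$ and $\lambda$ as in that Proposition. The set $C_{k-1}\cup\{\lambda\}\cup C_k$ has $\abs{C_{k-1}}+1+\abs{C_k}=r+1+(m-r)=m+1$ elements; I would identify it with $\{1,2,\dotsc,m+1\}$ as totally ordered sets, and note that under this identification $\lambda$ becomes $r+1$, since $\lambda$ exceeds all $r$ elements of $C_{k-1}$ and is below all $m-r$ elements of $C_k$. So by the Proposition it suffices to count $\star$-permutations $s_1s_2\dotsc$ of $\{1,\dotsc,m+1\}$ with $s_1=r+1$, with at most $n-m$ stars, and with exactly $k-1$ $\star$-descents.

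Next I would stratify by the number $i$ of stars, $0\le i\le n-m$. Given such a $\star$-permutation $s$ with $i$ stars, delete the stars to obtain a genuine permutation $w=w_1\dotsc w_{m+1}\in S_{m+1}$; since $s_1=r+1$ is not a star, $w_1=r+1$. The key observation, obtained by unwinding the definition of $\star$-descent, is that the $\star$-descents of $s$ are precisely (i) the $i$ positions occupied by stars, and (ii) the positions of those $w_a$ with $1\le a\le m$ and $w_a>w_{a+1}$, i.e.\ the ordinary descents of $w$. Indeed every star-position is a $\star$-descent by the first clause of the definition; a non-star position carrying $w_a$ with $a\le m$ is a $\star$-descent iff the next element of $S$ in the sequence, namely $w_{a+1}$, satisfies $w_a>w_{a+1}$; and the last element $w_{m+1}$ is never a $\star$-descent since no element of $S$ follows it. Hence $s$ has $i+\des(w)$ $\star$-descents, and the condition that $s$ have exactly $k-1$ $\star$-descents becomes $\des(w)=k-1-i$. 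Conversely, to recover all valid $s$ with $i$ stars from such a $w$, one chooses which $i$ of the positions $2,3,\dotsc,m+1+i$ are stars (the remaining positions being filled by $w_2,\dotsc,w_{m+1}$ in order; position $1$ must hold $\lambda$), and there are $\binom{m+i}{i}=\binom{m+i}{m}$ such choices, independent of $w$. This gives a bijection between valid $\star$-permutations with exactly $i$ stars and pairs consisting of such a choice together with a permutation $w\in S_{m+1}$ with $w_1=r+1$ and $\des(w)=k-1-i$; there are $A(m,k-i;r)$ of the latter. Summing over $i=0,\dotsc,n-m$ yields $A_{n-m,0^{k-3},r,m-r,0^{n-k}}=\sum_{i=0}^{n-m}\binom{m+i}{m}A(m,k-i;r)$.

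For the ``in particular'' statement I would take $r=0$: a permutation of $S_{m+1}$ beginning with $1$ becomes, upon deleting that letter and subtracting $1$ from the rest, an arbitrary permutation of $S_m$ with the same descents, so $A(m,k-i;0)=A(m,k-i)$; with the stated convention $A(m,\ell)=0$ for $\ell\le 0$ or $\ell>m$, the out-of-range terms vanish automatically, matching the $\star$-descent count never being negative or exceeding $m+i$. I expect the only real obstacle to be the careful bookkeeping in the second paragraph — pinning down the $\star$-descent/descent correspondence in the presence of runs of consecutive stars, trailing stars, and the special behavior at the last element — after which the remaining enumeration is a routine stars-and-bars (equivalently, choose-a-subset) count.
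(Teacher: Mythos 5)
Your proposal is correct and follows exactly the route the paper intends: the paper states this corollary immediately after the $\star$-permutation bijection and leaves the enumeration implicit, and your count --- stratifying by the number $i$ of stars, observing that the $\star$-descents are precisely the star positions together with the ordinary descents of the underlying permutation $w\in S_{m+1}$ with $w_1=r+1$, and choosing the star positions in $\binom{m+i}{m}$ ways --- is the intended argument, carried out with the right attention to trailing stars and the last position.
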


\section{Type $B$ mixed Eulerian numbers}

We now give an analogous combinatorial interpretation for the numbers $B_{c_1,\dotsc,c_n}$. Let $C = (C_1,\dotsc,C_n)$ be a division of a set $S$. We say that an element $s \in S$ is \emph{type $B$ admissible} with respect to $C$ if either $s$ is the smallest element of $C_1$ or $s \in C_i$ for $i \neq 1$. Given a type $B$ admissible element $s$, we now define the \emph{type $B$ deletion} of $s$ from $C$, which by abuse of notation we denote by $C^s$. Let $i$ be such that $s \in C_i$. If $i \neq n$, then we define $C^s$ to be the same as in the type $A$ case. If $i = n$, then we define
\[
C^s = (C_1,\dotsc,C_{n-2},C_{n-1} \cup (C_n \minus \{s\})).
\]
Given these definitions of admissibility and deletion, we define a \emph{type $B$ $C$-permutation} analogously as in the type $A$ case.

Recall that we defined
\[
B_{c_1,\dotsc,c_n} = n! \Vol(\Gamma_{1,n}^{c_1},\dotsc,\Gamma_{n,n}^{c_n}).
\]

\begin{thm} \label{mainB}
Let $C$ be a division. Then $B_{\abs{C}}$ equals $2^n$ times the number of type $B$ $C$-permutations.
\end{thm}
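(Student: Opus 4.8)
The plan is to run the argument of Theorem~\ref{main} almost verbatim, with permutohedra replaced by signed permutohedra and the hypersimplices $\Delta_{k,n}$ by the polytopes $\Gamma_{k,n}$. Set
\[
g_n(\lambda_1,\dotsc,\lambda_n) = \Vol(\lambda_1\Gamma_{1,n} + \dotsb + \lambda_n\Gamma_{n,n}) = \sum_{c_1+\dotsb+c_n=n}\frac{1}{c_1!\dotsb c_n!}B_{c_1,\dotsc,c_n}\lambda_1^{c_1}\dotsb\lambda_n^{c_n},
\]
so that $B_{c_1,\dotsc,c_n} = \partial_1^{c_1}\dotsb\partial_n^{c_n}g_n$, and recall from Proposition~\ref{spchar2} that $\lambda_1\Gamma_{1,n}+\dotsb+\lambda_n\Gamma_{n,n} = SP(\lambda_1+\dotsb+\lambda_n,\lambda_2+\dotsb+\lambda_n,\dotsc,\lambda_n)$. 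The geometric heart of the proof is a cross-section statement for $SP$ analogous to Proposition~\ref{propcross}: using Proposition~\ref{spchar1} in place of Proposition~\ref{pchar1}, one shows that for real numbers $y_1\ge\dotsb\ge y_n\ge 0$ and $\abs{x}\le y_1$, the slice of $SP(y_1,\dotsc,y_n)$ by the hyperplane $\{x_1 = x\}$ equals
\[
\{x\}\times SP(y_1,\dotsc,y_{i-1},\ y_i+y_{i+1}-\abs{x},\ y_{i+2},\dotsc,y_n) \qquad\text{when } y_{i+1}\le\abs{x}\le y_i,\ 1\le i\le n-1,
\]
and equals $\{x\}\times SP(y_1,\dotsc,y_{n-1})$, independently of $x$, when $\abs{x}\le y_n$. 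This last ``prism'' band of slices is the only genuine departure from the type $A$ situation; it arises because $SP$ is full-dimensional in $\bb R^n$ rather than lying in a hyperplane of $\bb R^{n+1}$, and it is precisely what the special type $B$ deletion rule for $C_n$ is designed to encode.

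Translating this through Proposition~\ref{spchar2} just as Corollary~\ref{corcross} follows from Proposition~\ref{propcross}, and using the symmetry $x_1\mapsto -x_1$ of $SP$ to write $\Vol(SP)$ as twice the integral of the slice volume over $x\in[0,\lambda_1+\dotsb+\lambda_n]$, one obtains the recursion
\begin{multline*}
g_n(\lambda_1,\dotsc,\lambda_n) = 2\int_0^{\lambda_1}g_{n-1}(t+\lambda_2,\lambda_3,\dotsc,\lambda_n)\,dt \\
{}+ 2\sum_{i=2}^{n-1}\int_0^{\lambda_i}g_{n-1}(\lambda_1,\dotsc,\lambda_{i-2},\lambda_{i-1}+\lambda_i-t,\ t+\lambda_{i+1},\lambda_{i+2},\dotsc,\lambda_n)\,dt \\
{}+ 2\lambda_n\,g_{n-1}(\lambda_1,\dotsc,\lambda_{n-2},\lambda_{n-1}+\lambda_n),
\end{multline*}
whose last term is what the type $A$ integral $\int_0^{\lambda_n}f_{n-1}(\lambda_1,\dotsc,\lambda_{n-1}+\lambda_n-t)\,dt$ degenerates to once the integrand no longer depends on $t$.

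Now fix a division $C$ with $\abs{C} = (c_1,\dotsc,c_n)$ and apply $\partial_1^{c_1}\dotsb\partial_n^{c_n}$. The first two groups of terms are handled word for word as in the proof of Theorem~\ref{main} (with $f$ replaced by $g$ and $A$ by $B$), and contribute $2B_{\abs{C^1}}$ (with the convention that this term is absent when $C_1=\emptyset$) and $2\sum_{i=2}^{n-1}\sum_{s\in C_i}B_{\abs{C^s}}$; for every $s$ occurring here the type $A$ and type $B$ deletions of $s$ coincide, and the single term $B_{\abs{C^1}}$ reflects the fact that only the smallest element of $C_1$ is type $B$ admissible. For the last term, the Leibniz rule applied to $\lambda_n$ times $g_{n-1}(\lambda_1,\dotsc,\lambda_{n-2},\lambda_{n-1}+\lambda_n)$ produces two summands; the one still carrying a factor $\lambda_n$ is annihilated by the remaining operator $\partial_1^{c_1}\dotsb\partial_{n-1}^{c_{n-1}}$ because $g_{n-1}$ has degree $n-1<c_1+\dotsb+c_n$, while the other equals $c_n B_{c_1,\dotsc,c_{n-2},c_{n-1}+c_n-1} = \sum_{s\in C_n}B_{\abs{C^s}}$ by the type $B$ deletion rule for elements of $C_n$. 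Hence
\[
B_{c_1,\dotsc,c_n} = 2\left(B_{\abs{C^1}} + \sum_{i=2}^{n-1}\sum_{s\in C_i}B_{\abs{C^s}} + \sum_{s\in C_n}B_{\abs{C^s}}\right) = 2\sum_{s}B_{\abs{C^s}},
\]
the last sum running over all type $B$ admissible $s\in S$. Since the number $N_C$ of type $B$ $C$-permutations satisfies the analogous recursion $N_C = \sum_s N_{C^s}$ over type $B$ admissible $s$, and since for $n=1$ we have $N_C = 1$ while $B_1 = \Vol(\Gamma_{1,1}) = 2 = 2^1 N_C$, induction on $n$ gives $B_{\abs{C}} = 2^n N_C$, which is the assertion.

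I expect the cross-section computation in the band $\abs{x}\le y_n$ to be the one real obstacle, everything else being a transcription of the type $A$ argument. There one must check that passing to the slice does \emph{not} split $C_n$ about the deleted element or shift any later blocks (there being none to shift), but simply merges $C_n\minus\{s\}$ into $C_{n-1}$; correspondingly the last term of $g_n$ is a polynomial times $\lambda_n$ rather than a genuine integral, so the ``differentiation under the integral'' step collapses into an ordinary product-rule computation that produces exactly the factor $c_n$ needed to recover $\sum_{s\in C_n}B_{\abs{C^s}}$. The remaining bookkeeping --- that only the smallest element of $C_1$ is type $B$ admissible, matching the single term $B_{\abs{C^1}}$ from the first integral, and that the global factor $2$ compounds through all three groups of terms to $2^n$ after the induction --- is routine once the proof of Theorem~\ref{main} is in hand.
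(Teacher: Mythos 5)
Your proposal is correct and follows essentially the same route as the paper: the same cross-section lemma for $SP(y_1,\dotsc,y_n)$ (with the degenerate ``prism'' band $\abs{x}\le y_n$ accounting for the special type $B$ deletion from $C_n$), the same recursion with an overall factor of $2$ from the $x\mapsto -x$ symmetry, and the same induction from $B_1=2$. Your handling of the last term as $2\lambda_n\,g_{n-1}(\lambda_1,\dotsc,\lambda_{n-2},\lambda_{n-1}+\lambda_n)$ and the Leibniz-rule extraction of the factor $c_n$ is exactly what the paper's $\int_0^{\lambda_n}$ term amounts to.
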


\begin{proof}
Since the proof is analogous to the type $A$ case, we will give an outline and leave details to the reader. Define
\begin{align*}
f_n(\lambda_1,\dotsc,\lambda_n) &= \Vol(\lambda_1\Gamma_{1,n} + \lambda_2\Gamma_{2,n} + \dotsb + \lambda_n\Gamma_{n,n}) \\
&= \sum_{c_1+\dotsb+c_n = n} \frac{1}{c_1!\dotsb c_n!} B_{c_1,\dotsc,c_n} \lambda_1^{c_1} \dotsb \lambda_n^{c_n}
\end{align*}
so that
\[
B_{c_1,\dotsc,c_n} = \partial_1^{c_1} \dotsb \partial_n^{c_n} f_n.
\]

We make the following observations, which are proven similarly to Proposition~\ref{propcross}, Corollary~\ref{corcross}, and Proposition~\ref{intrecursion}.

\begin{prop}
Let $y_1 \ge \dotsb \ge y_n \ge 0$ be real numbers, and let $SP = SP(y_1,\dotsc,y_n)$. Fix a real number $-y_1 \le x \le y_1$, and let $SP_x$ denote the cross section of $SP$ with first coordinate equal to $x$. Let $1 \le i \le n$ be such that $y_{i+1} \le \abs{x} \le y_i$, where we set $y_{n+1} = 0$. Then $SP_x$ is equal to
\[
\{x\} \times SP(y_1,\dotsc,y_{i-1},y_i+y_{i+1}-\abs{x},y_{i+1},\dotsc,y_n)
\]
if $i \le n-1$, and
\[
\{x\} \times SP(y_1,\dotsc,y_{n-1})
\]
if $i=n$.
\end{prop}

\begin{cor}
Let $\lambda_1$, \dots, $\lambda_n$ be nonnegative real numbers. Fix a real number $-(\lambda_1+\dotsb+\lambda_n) \le x \le \lambda_1+\dotsb+\lambda_n$, and let $1 \le i \le n$ be such that $\lambda_{i+1} + \dotsb + \lambda_n \le \abs{x} \le \lambda_i + \dotsb + \lambda_n$ (where $0 \le \abs{x} \le \lambda_n$ if $i=n$). Set $t = \lambda_i + \dotsb + \lambda_n - \abs{x}$. Then the cross section of
\[
\lambda_1 \Gamma_{1,n} + \lambda_1 \Gamma_{2,n} + \dotsb + \lambda_n \Gamma_{n,n}
\]
with first coordinate equal to $x$ is equal to $\{x\} \times Q$, where $Q$ is the following polytope in the following cases:
\begin{itemize}
\item If $i=1$,
\[
(t+\lambda_2)\Gamma_{1,n-1} + \lambda_3 \Gamma_{2,n-1} + \dotsb + \lambda_n \Gamma_{n-1,n-1}
\]
\item If $2 \le i \le n-1$,
\begin{multline*}
\lambda_1\Gamma_{1,n-1} + \dotsb + \lambda_{i-2}\Gamma_{i-2,n-1} + (\lambda_{i-1} + \lambda_i - t)\Gamma_{i-1,n-1} \\ + (t+\lambda_{i+1})\Gamma_{i,n-1} + \lambda_{i+2}\Gamma_{i+1,n-1} + \dotsb + \lambda_n\Gamma_{n-1},
\end{multline*}
\item If $i=n$,
\[
\lambda_1\Gamma_{i,n-1} + \dotsb + \lambda_{n-2}\Gamma_{n-2,n-1} + (\lambda_{n-1} + \lambda_n)\Gamma_{n-1,n-1}.
\]
\end{itemize}
\end{cor}

\begin{prop}
We have
\begin{align*}
f_n(\lambda_1,\dotsc,\lambda_n) ={ }& 2\int_0^{\lambda_1} f_{n-1}(t+\lambda_2,\lambda_3,\dotsc,\lambda_n)\,dt \\
& + 2\sum_{i = 2}^{n-1} \int_0^{\lambda_i} f_{n-1}(\lambda_1, \dotsc, \lambda_{i-2}, \lambda_{i-1}+\lambda_i-t, t+\lambda_{i+1}, \lambda_{i+2}, \dotsc, \lambda_n)\,dt \\
& + 2\int_0^{\lambda_n} f_{n-1}(\lambda_1,\dotsc,\lambda_{n-2},\lambda_{n-1}+\lambda_n)\,dt.
\end{align*}
\end{prop}

Differentiating this last equation, we obtain
\[
B_{c_1,\dotsc,c_n} = 2 \left( B_{\abs{C^1}} + \sum_{i=2}^{n-1}\sum_{s \in C_i} B_{\abs{C^s}} + \sum_{s \in C_n} B_{\abs{C^s}} \right)
\]
where $C$ is a division with $\abs{C} = (c_1,\dotsc,c_n)$ and all deletions are type $B$ deletions. The desired result follows by induction with the base case $B_1 = 2$.
\end{proof}

Using Theorem~\ref{mainB}, we obtain the following properties of type $B$ mixed Eulerian numbers. The proofs are similar to the type $A$ case and we omit them here.

\begin{thm} \label{propertiesB}
The type $B$ mixed Eulerian numbers have the following properties.
\renewcommand{\theenumi}{\alph{enumi}}
\renewcommand{\labelenumi}{(\theenumi)}
\begin{enumerate}
\item We have $2^n A_{c_1,\dotsc,c_n} \le B_{c_1,\dotsc,c_n} \le 2^n 1^{c_1} 2^{c_2} \dotsb n^{c_n}$. Each inequality is equality if and only if $c_1 + \dotsb + c_i \ge i$ for all $i$.
\item For $1 \le k \le n$, the number $B_{0^{k-1},n,0^{n-k}}$ is equal to $2^n$ times the number of permutations in $S_n$ with at most $k-1$ descents.
\item For $1 \le k \le n-1$ and $0 \le r \le n$, the number $B_{0^{k-1},r,n-r,0^{n-k-1}}$ is equal to $2^n$ times the number of permutations $w \in S_{n+1}$ with at most $k$ descents and $w_1 = r+1$.
\item We have $B_{1,\dotsc,1} = 2^n n!$.
\item We have $B_{k,0,\dotsc,0,n-k} = \binom{n}{k}(n-k)!$.
\item We have $B_{c_1,\dotsc,c_n} = 2^n 1^{c_1} 2^{c_2} \dotsb n^{c_n}$ if $c_1 + \dotsb + c_i \ge i$ for all $i$.
\item We have $B_{c_1,\dotsc,c_n} = 2^n n!$ if $c_n + c_{n-1} + \dotsb + c_{n-i+1} \ge i$ for all $i$.
\item We have
\[
B_{c_1,\dotsc,c_n} = 2^n \binom{n}{c_1+\dotsb+c_r} 1^{c_1} 2^{c_2} \dotsb r^{c_r} (c_{r+1} + \dotsb + c_n)!
\]
if there exists some $0 \le r \le n$ such that $c_1 + \dotsb + c_i \ge i$ for all $1 \le i \le r$ and $c_n + c_{n-1} + \dotsb + c_{n-i+1} \ge i$ for all $1 \le i \le n-r$.
\end{enumerate}
\end{thm}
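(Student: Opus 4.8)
The plan is to deduce every part from Theorem~\ref{mainB} by establishing type~$B$ analogues of the structural facts about type~$A$ $C$-permutations from Sections~3 and~4. The guiding principle is that a type~$B$ $C$-permutation behaves like a type~$A$ one, except that at any stage one may delete \emph{any} element of the current last set, not only its maximum. The first thing to record is the comparison between the two notions: every type~$A$ admissible element is type~$B$ admissible, and for such an element the two deletion operations agree (the only case to check is $s=\max C_n$, where both rules replace $C_{n-1},C_n$ by $C_{n-1}\cup(C_n\minus\{s\})$). Hence every type~$A$ $C$-permutation is a type~$B$ $C$-permutation, so $2^n A_{\abs C}\le B_{\abs C}$. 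Moreover, if $C$ is superdiagonal then by Proposition~\ref{superdiagonal} every division arising in the deletion process has last set of size $\le 1$, so the extra type~$B$ moves are never available and the two sets of permutations coincide; with Theorem~\ref{properties}\eqref{propeq} this gives part~(f) and the ``if'' direction of the equalities in~(a).

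Next I would set up a type~$B$ index function exactly as in Section~3.2, using type~$B$ deletions. Deletions still only decrease indices (in the new $C_n$-case, the elements of $C_n$ move down to position $n-1$), so an element of $C_i$ always receives an index in $\{1,\dots,i\}$, and the argument for the first half of Proposition~\ref{injection} shows $w\mapsto I^C_w$ is still injective; counting the $1^{c_1}2^{c_2}\dotsb n^{c_n}$ index functions gives the upper bound in~(a). For the equality assertions in~(a) I would prove the type~$B$ analogue of Proposition~\ref{injection}: the constant index function $s\mapsto 1$ is realized precisely when one can always delete from the first set, which, exactly as in type~$A$, happens if and only if $C$ is superdiagonal (this handles the upper inequality). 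For the lower inequality, when $C$ is not superdiagonal let $i$ be the largest index with $\abs{C_1}+\dots+\abs{C_i}<i$; then either $c_n\ge 2$, so deleting a non-maximal element of $C_n$ first gives a type~$B$ but not type~$A$ $C$-permutation, or else deleting the smallest element of $C_{i+1}$ leaves a division still not superdiagonal, and one finishes by induction, comparing the recursion $B_{c_1,\dots,c_n}=2\bigl(B_{\abs{C^1}}+\sum_{i=2}^{n-1}\sum_{s\in C_i}B_{\abs{C^s}}+\sum_{s\in C_n}B_{\abs{C^s}}\bigr)$, in which $\sum_{s\in C_n}B_{\abs{C^s}}=c_n\,B_{(c_1,\dots,c_{n-2},c_{n-1}+c_n-1)}$, against the corresponding recursion for $A_{c_1,\dots,c_n}$.

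For the remaining identities I would prove the expected type~$B$ analogues of the relevant lemmas. A subdiagonal division has $\abs{C_1}\le 1$, so every element is type~$B$ admissible, and subdiagonality is preserved under type~$B$ deletion (the type~$B$ analogue of Proposition~\ref{superdiagonal}); hence every permutation of $S$ is a type~$B$ $C$-permutation, which gives~(g). The argument of Example~\ref{examall} applies verbatim to~(d). For~(e), the type~$B$ analogue of Example~\ref{exambinom} says that for $C=(C_1,\emptyset^{n-2},C_n)$ the type~$B$ $C$-permutations are exactly the interleavings of the increasing listing of $C_1$ with an \emph{arbitrary} permutation of $C_n$ (every element of $C_n$ is always admissible), and there are $\binom{n}{k}(n-k)!$ of these. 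Part~(h) goes via the argument of Theorem~\ref{hybrid}: its hypotheses force $\abs{C_r}\le 1$ and $\abs{C_{r+1}}\le 1$ throughout, so a type~$B$ $C$-permutation decomposes canonically into a type~$B$ $(C_1,\dots,C_r)$-permutation ($(C_1,\dots,C_r)$ superdiagonal), a type~$B$ $(C_{r+1},\dots,C_n)$-permutation ($(C_{r+1},\dots,C_n)$ subdiagonal), and a choice of interleaving, yielding the product $\binom{n}{c_1+\dots+c_r}1^{c_1}\dotsb r^{c_r}(c_{r+1}+\dots+c_n)!$.

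Finally, (b) and (c) follow from the type~$B$ analogue of Propositions~\ref{generaleulerian} and~\ref{generalpartialeulerian}: for $\abs C=(0^{k-1},r,n-r,0^{n-k-1})$ with the separating element $\lambda$ placed between $C_k$ and $C_{k+1}$, a permutation $w=w_1\dotsc w_n$ of $S$ is a type~$B$ $C$-permutation if and only if the sequence $\lambda,w_1,\dots,w_n$ has \emph{at most} $k$ descents --- the replacement of ``exactly'' by ``at most'' being precisely the effect of being allowed to delete non-maximal elements of the last block once the mass has collected there. Counting such sequences with fixed first term $\lambda$ (which sits in position $r+1$) gives~(c), and~(b) is the case $r=0$ (or $r=n$). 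The step I expect to be the main obstacle is the ``only if'' direction of the equality statement in~(a), specifically the strict inequality $2^n A_{\abs C}<B_{\abs C}$ when $C$ is not superdiagonal: this needs the bookkeeping above to guarantee that non-superdiagonality propagates through at least one branch of the recursion, and the base cases together with the $C_1=\emptyset$ and $c_n=0$ corners will require care. A secondary subtlety is pinning down the exact ``at most $k$ descents'' statement for the type~$B$ analogue of Proposition~\ref{generaleulerian}, whose induction has a genuinely new case ($w_1\in C_n$ deleted by the type~$B$ rule) with no type~$A$ counterpart.
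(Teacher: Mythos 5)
Your proposal is correct and follows exactly the route the paper intends: the paper omits these proofs as "similar to the type $A$ case," and your adaptations (type~$B$ admissibility contains type~$A$ admissibility with matching deletions, type~$B$ index functions with the same injectivity/surjectivity dichotomy, preservation of sub/superdiagonality under type~$B$ deletion, and the "at most $k$ descents" characterization replacing "exactly $k-1$") are the right lemmas, and the details you flag as delicate do check out.

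One point you should have caught: your (correct) count for part~(e) — that the type~$B$ $C$-permutations for $C=(C_1,\emptyset^{n-2},C_n)$ are the $\binom{n}{k}(n-k)!$ shuffles of $C_1$ in increasing order with $C_n$ in arbitrary order — combined with Theorem~\ref{mainB} yields $B_{k,0,\dotsc,0,n-k}=2^n\binom{n}{k}(n-k)!$, not the stated $\binom{n}{k}(n-k)!$. The statement of~(e) in the theorem is in fact missing the factor $2^n$ (it is inconsistent with part~(h) at $r=k$, and with the direct computation $B_{1,1}=8$ for $n=2$), so your argument proves the corrected identity; a careful write-up should note this discrepancy rather than silently present the permutation count as if it were the stated formula.
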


\subsection*{Acknowledgments}
The author would like to thank Alexander Postnikov for introducing this problem to him and for many useful discussions. This material is based upon work supported by the National Science Foundation Graduate Research Fellowship under Grant No.\ 1122374.

\end{document}